\newcommand{\A}{\mathbb{A}}
\newcommand{\N}{\mathbb{N}}
\newcommand{\R}{\mathbb{R}}
\newcommand{\OO}{\mathcal{O}}
\newcommand{\MM}{\mathcal{M}}
\newcommand{\PP}{\mathcal{P}}
\renewcommand{\SS}{\mathcal{S}}
\newcommand{\TT}{\mathcal{T}}
\newcommand{\NN}{\mathcal{N}}
\newcommand{\capacity}{{\rm Cap}}
\newcommand{\ncap}{{\rm Cap}}
\newcommand{\refine}{{\tt refine}}
\newcommand{\dual}[3][]{#1\langle#2\,,\,#3#1\rangle}
\newcommand{\enorm}[2][]{#1|\!#1|\!#1|\,#2\,#1|\!#1|\!#1|}
\newcommand{\norm}[3][]{#1\|#2#1\|_{#3}}
\newcommand{\set}[3][\big]{#1\{#2\,:\,#3#1\}}
\newcommand{\Crel}{C_{\rm rel}}
\newcommand{\Ceff}{C_{\rm eff}}
\newcommand{\Cmark}{C_{\rm mark}}
\newcommand{\Clin}{C_{\rm lin}}
\newcommand{\qlin}{q_{\rm lin}}
\newcommand{\Copt}{C_{\rm opt}}
\newcommand{\err}{{\texttt{err}}}
\newtheorem{lemma}{Lemma}
\newtheorem{theorem}[lemma]{Theorem}
\newtheorem{proposition}[lemma]{Proposition}
\newtheorem{algorithm}[lemma]{Algorithm}
\newtheorem{remark}[lemma]{Remark}
\newtheorem{corollary}[lemma]{Corollary}
\newcommand{\revision}[1]{{\color{black}#1}}
\def\matrix#1{\mathbf{#1}}
\def\@seccntformat#1{\hspace*{4mm}%
  \protect\textup{\protect\@secnumfont
    \ifnum\pdfstrcmp{subsection}{#1}=0 \bfseries\fi
    \csname the#1\endcsname
    \protect\@secnumpunct
  }%
}
\title{Adaptive boundary element methods for the computation of the electrostatic capacity on complex polyhedra}
\author{Timo Betcke}
\address{University College London, Department of Mathematics, Gower Street, London WC1E 6BT, UK}
\email{t.betcke@ucl.ac.uk}
\author{Alexander Haberl}
\author{Dirk Praetorius}
\address{TU Wien, Institute for Analysis and Scientific Computing, Wiedner Hauptstr.\ 8-10 / E101 / 4, 1040 Wien, Austria}
\email{\{Dirk.Praetorius\,,\,Alexander.Haberl\}@asc.tuwien.ac.at}
\date{\today}
\thanks{{\bf Acknowledgement.} The research of AH and DP is funded by the Austrian Science Fund (FWF) by the research project \emph{Optimal adaptivity for BEM and FEM-BEM coupling} (grant P27005)
and the special research program \emph{Taming complexity in PDE systems} (grant SFB F65).}
\keywords{electrostatic capacity, boundary integral equations, adaptivity, operator preconditioning}
\begin{document}
\definecolor{uququq}{rgb}{0.25,0.25,0.25}
\definecolor{qqqqff}{rgb}{0,0,1}

\begin{abstract}
The accurate computation of the electrostatic capacity of three dimensional objects is a fascinating benchmark problem with a long and rich history. In particular, the capacity of the unit cube has widely been studied, and recent advances allow to compute its capacity to more than ten digits of accuracy. However, the accurate computation of the capacity for general three dimensional polyhedra is still an open problem. In this paper, we propose a new algorithm based on a combination of ZZ-type {\sl a posteriori} error estimation and effective operator preconditioned boundary integral formulations to easily compute the capacity of complex three dimensional polyhedra to 5 digits and more. While this paper focuses on the capacity as a benchmark problem, it also discusses implementational issues of adaptive boundary element solvers, and we provide codes based on the boundary element package Bempp to make the underlying techniques accessible to a wide range of practical problems.
\end{abstract}

\maketitle

\section{Introduction}
\label{section:intro}

\subsection{The capacity problem}

The capacity $\capacity(\Omega)$ of an isolated conductor $\Omega\subset\mathbb{R}^3$
measures its ability to store charges. It is defined 
as the ratio of the total surface equilibrium charge
relative to its surface potential value \cite{kellogg}. To compute the capacity,
we therefore need to solve the following exterior Laplace problem for the equilibrium
potential $u$ with unit surface value:
\begin{subequations}
\label{eq:laplace}
\begin{align}
-\Delta u &= 0& &\hspace*{-20mm}\text{in }\mathbb{R}^3 \backslash \overline \Omega,\label{eq:laplace1}\\
u &= 1& &\hspace*{-20mm}\text{on }\Gamma:=\partial\Omega,\label{eq:laplace2}\\
|u(x)| &= \mathcal{O}\left(|x|^{-1}\right)& &\hspace*{-20mm}\text{as }|x|\rightarrow\infty\label{eq:laplace3}.
\end{align}
\end{subequations}
The total surface charge of an isolated conductor is then given by Gauss' law as
\begin{equation}
\label{def:capacity}
\capacity^*(\Omega)=-\epsilon_0\int_{\Gamma}\frac{\partial u}{\partial\nu}(x)\,d\Gamma(x).
\end{equation}
Here, $\nu(x)$ is the outward pointing normal vector for $x\in\Gamma$, and $\epsilon_0$ is the electric constant with value $\epsilon_0\approx 8.854\times 10^{-12}\,{\rm F/m}$. In the rest of the paper, we will use the normalized
capacity $\ncap(\Omega)=-\frac{1}{4\pi}\int_{\Gamma}\partial u/\partial\nu\,d\Gamma$, which is commonly used in the literature.

\subsection{State of the art}

Analytic expressions of the capacity in 3D are only known for very few simple domains, such as a sphere with radius $r$, for which 
$\ncap(\Omega)=r$. Moreover, computing the capacity to high accuracy even for simple shapes such as the unit cube is exceedingly difficult as it involves the solution of the exterior Laplace problem~\eqref{eq:laplace} for (possibly) non-smooth domains. This is very different from the 2D case, where techniques such as fast Schwarz-Christoffel maps \cite{driscoll02,banjai03} allow the computation of the logarithmic capacity to many digits of accuracy even on complex domains.

Nevertheless, there have been a range of interesting developments over the years to compute the capacity in three dimensions. A frequently used benchmark example is the capacity of the unit cube, which we will also use in this paper to compare our results to existing methods. An early bound for the normalized capacity $C$ of the unit cube was published by P\'olya already in 1947 \cite{polya47} who estimated that
$$
0.62033 < C < 0.71055.
$$
Over the years, several improvements have been made, some of which are summarized in  \cite{HMW10}. In that paper, techniques based on random walks are used to compute the capacity as  $0.66067813$ with an error believed to be in the order of  $\pm 1.01\times 10^{-7}$.
In~2013,~\cite{hp13} improved the existing computations by using Nystr\"om methods and multilevel solvers to $0.66067815409957$ with an error in the order of $10^{-13}$.

Adaptive boundary element computations for the capacity are not completely new. For example, in \cite{Read97} a pre-chosen anisotropic refinement towards the edges is used together with an extrapolation technique to compute the capacity of the unit cube to around six digits of accuracy. However, the computations in that paper are simplified by exploiting the special symmetry of the cube and do not generalize to arbitrary polyhedra. Moreover, \cite{Read97} briefly mentions also an adaptive refinement strategy that is based on refining elements with large charge contributions.

\subsection{Contributions of the present work}

In this paper, we present a black-box method for capacity computations of polyhedra in three dimensions, which achieves a similar accuracy of order $10^{-6}$ for the unit cube. Our method is based on an adaptive boundary element computation, which uses a ZZ-type {\sl a~posteriori} error estimator to steer the mesh-refinement in combination with a suitable operator preconditioning.
Our ansatz is completely generic in that the adaptive refinement strategy works for any type of polyhedron and quickly generates meshes that compute the capacity for a given shape to several digits of accuracy. 

\def\diam{{\rm diam}}
\subsection{Main results in a nutshell}

A boundary integral formulation for the computation of the capacity can be derived by considering Green's representation theorem and noting that the exterior Laplace double-layer potential is zero for constant densities. We hence obtain a representation of the solution $u$ of \eqref{eq:laplace} as
\begin{equation}
\label{eq:slp}
u(x) = \int_{\Gamma} G(x,y)\phi(y)\,d\Gamma(y) \quad\text{for all } x\in\mathbb{R}^3\backslash \overline{\Omega},
\end{equation}
where $\phi:=-{\partial u}/{\partial\nu}$ is the (negative) normal derivative of the exterior solution $u$ and $G(x,y):=\frac{1}{4\pi|x-y|}$ is the  Green's function of the 3D Laplacian. Taking boundary traces, the right-hand side of~\eqref{eq:slp} gives rise to  the Laplace single-layer integral operator $V$, and we arrive at the boundary integral equation of the first kind
\begin{equation}
\label{eq:laplace_integral}
1 = \int_{\Gamma} G(x,y)\phi(y)\,d\Gamma(y) =: [V\phi](x)\quad\text{for all }x\in\Gamma.
\end{equation}
Then,
\begin{align}
	\capacity(\Omega) = \frac{1}{4 \pi} \int_{\Gamma} \phi \, d\Gamma.
\end{align}
The challenge is to accurately compute the solution $\phi$ of~\eqref{eq:laplace_integral} close to edges and corners of a polyhedron. High-order collocation or Nystr\"{o}m methods work well for smooth obstacles, but are difficult to apply to surfaces with corners or edges. Therefore, we propose to use a Galerkin boundary element method (BEM) combined with
{\sl a~posteriori} error estimation and adaptive mesh-refinement. The proposed algorithm is of the common type
$$
\boxed{\tt~solve~}\quad\longrightarrow\quad
\boxed{\tt~estimate~}\quad\longrightarrow\quad
\boxed{\tt~mark~}\quad\longrightarrow\quad
\boxed{\tt~refine~}
$$
and generates a sequence of successively refined triangulations $\TT_\ell$ and corresponding Galerkin approximations $\Phi_\ell\approx\phi$ such that $\ncap_\ell(\Omega) := \frac{1}{4\pi}\int_\Gamma \Phi_\ell\,d\Gamma$ converges 
to $\ncap(\Omega)$ at the optimal algebraic rate. The mathematical foundation of such algorithms for BEM has recently been derived in~\cite{fkmp,gantumur,fkmp:part1,fkmp:part2}. One novelty in the present paper is that we combine adaptivity with an effective preconditioning strategy.

We assume that $\TT_\ell$ is a triangulation of $\Gamma$ into plane surface triangles. Let $\TT_\ell^{\rm dual}$ be  the induced dual grid; see Figure~\ref{fig:dual_element}. Let $\SS^1(\TT_\ell)$ be the $\TT_\ell$-piecewise affine globally continuous functions and $\PP^0(\TT_\ell^{\rm dual})$ be the $\TT_\ell^{\rm dual}$-piecewise constant functions on $\Gamma$. It is known \cite{StWe98,Hiptmair2006} that one can use the (regularized) discrete hypersingular integral operator $D_\ell^{\rm reg}$ on $\SS^1(\TT_\ell)$ to effectively precondition the discrete weakly-singular integral operator $V_\ell^{\rm dual}$ on $\PP^0(\TT_\ell^{\rm dual})$. The Galerkin approximation $\Phi_\ell^{\rm dual} \in \PP^0(\TT_\ell^{\rm dual})$ is obtained as 
\begin{equation}\label{eq:laplace_precond}
\Phi_\ell^{\rm dual} = D_\ell^{\rm reg} \widetilde{\Phi}_{\ell},
\quad \text{where $\widetilde{\Phi}_{\ell}$ solves the preconditioned system} \quad
(V_\ell D_\ell^{\rm reg} \widetilde{\Phi}_{\ell}) = 1.
\end{equation}
Having computed $\Phi_\ell^{\rm dual}$, we use that each vertex $z$ of $\TT_\ell$ corresponds to precisely one dual cell $T^{\rm dual}_z \in \TT_\ell^{\rm dual}$. To steer the mesh-refinement, we compute the ZZ-type error indicators
\begin{align}\label{eq:zz:T}
 \eta_\ell(T) := \diam(T)^{1/2} \, \norm{\Phi_\ell^{\rm dual} - I_\ell\Phi_\ell^{\rm dual}}{L^2(T)} 
 \quad \text{for all } T \in \TT_\ell,
\end{align}
where $I_\ell\Phi_\ell^{\rm dual} \in \SS^1(\TT_\ell)$ is the unique function with $I_\ell\Phi_\ell^{\rm dual}(z) = \Phi_\ell^{\rm dual}|_{T^{\rm dual}_z}$ for all vertices $z$ of $\TT_\ell$.
We refer to~\cite{zz1987,rodriguez1994,bc2002} for ZZ-type estimators for FEM \revision{and to~\cite{ffkp14,FFHKP15} for BEM.}
The local contributions $\eta_\ell(T)$ are then used to mark elements for refinement. An improved triangulation $\TT_{\ell+1}$ is obtained from $\TT_\ell$ by refining (essentially) these marked elements.

The mentioned results on adaptive BEM~\cite{fkmp,gantumur,fkmp:part1,fkmp:part2} consider residual error estimators, which provide
more mathematical structure than the heuristical ZZ-error estimator~\eqref{eq:zz:T}. However, we note that the evaluation and integration of the BEM residual
is usually more costly than the computation of the BEM solution ( see Remark~\ref{remak:costs} below), while the ZZ-error estimator comes essentially at no cost.

The striking advantage of the proposed strategy is that both discrete integral operators $D_\ell^{\rm reg}$ as well as $V_\ell^{\rm dual}$ can effectively be treated as follows: Let $\TT_\ell^{\rm bary}$ be the barycentric refinement of $\TT_\ell$. We then build the discrete weakly-singular integral operator $V_\ell^{\rm bary}$ with respect to $\PP^0(\TT_\ell^{\rm bary})$. Both operators $D_\ell^{\rm reg}$ and $V_\ell^{\rm dual}$ can be obtained by (sparse) projection operators applied to $V_{\ell}^{\rm bary}$; see Section~\ref{subsection:preconditioning}. Since the computation~\eqref{eq:laplace_precond} of $\Phi_\ell^{\rm dual}$ by iterative solvers relies only on matrix-vector products, we altogether assemble and store only $V_\ell^{\rm bary}$. Finally, since the number of elements satisfies $\#\TT_\ell^{\rm bary} = 6 \,\#\TT_\ell$, preconditioning and adaptivity do not lead to any significant overhead of the overall method.



\begin{figure}
	\centering
	\adjincludegraphics[width=0.50\textwidth,Clip={.10\width} {0.0\height} {0.1\width} {.0\height}]{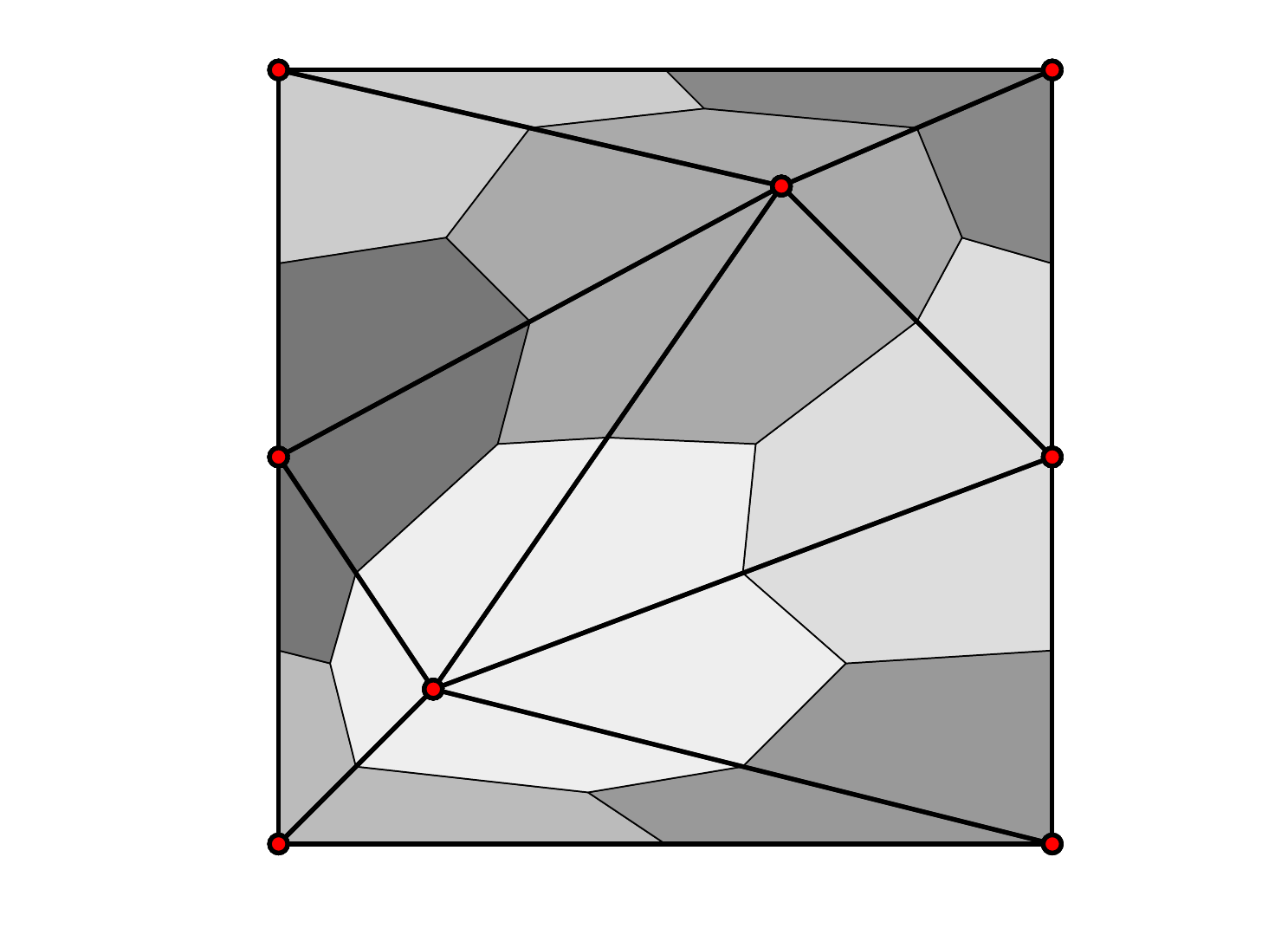}
	\adjincludegraphics[width=0.50\textwidth,Clip={.10\width} {.\height} {0.10\width} {.0\height}]{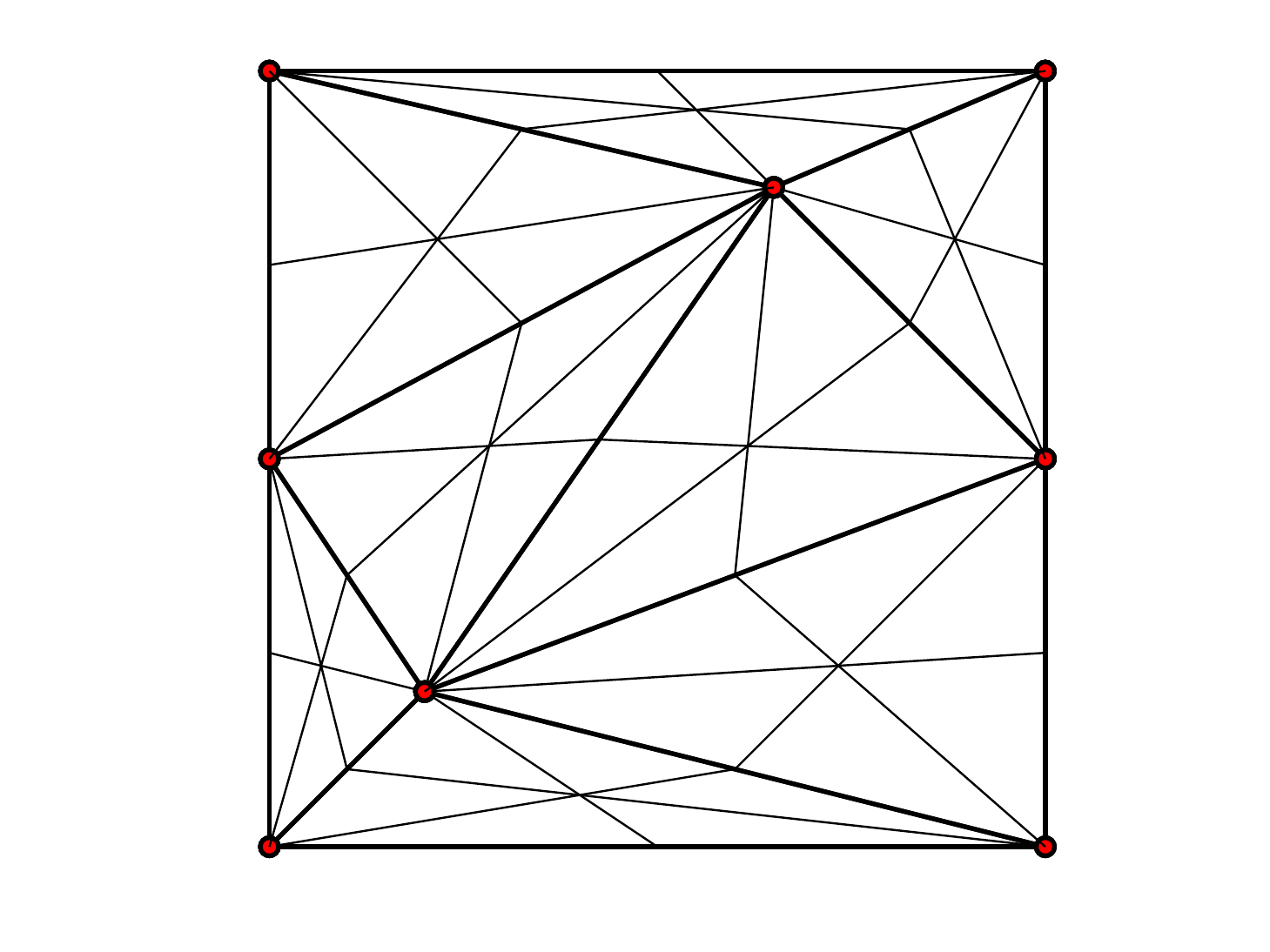}
\caption{Construction of the dual mesh $\TT_\ell^{\rm dual}$ (left) and the barycentric refinement $\TT_\ell^{\rm bary}$ (right) for a given mesh $\TT_\ell$ indicated by the thick edges (left and right). 
Each (polygonal) dual element $T^{\rm dual} \in \TT_\ell^{\rm dual}$ (left, gray) corresponds to one node of $\TT_\ell$. Each (triangular) element $T^{\rm bary} \in \TT_\ell^{\rm bary}$ belongs to precisely one triangle $T \in \TT_\ell$.}
\label{fig:dual_element}
\end{figure}


\begin{figure}
\center
\includegraphics[width=7.5cm]{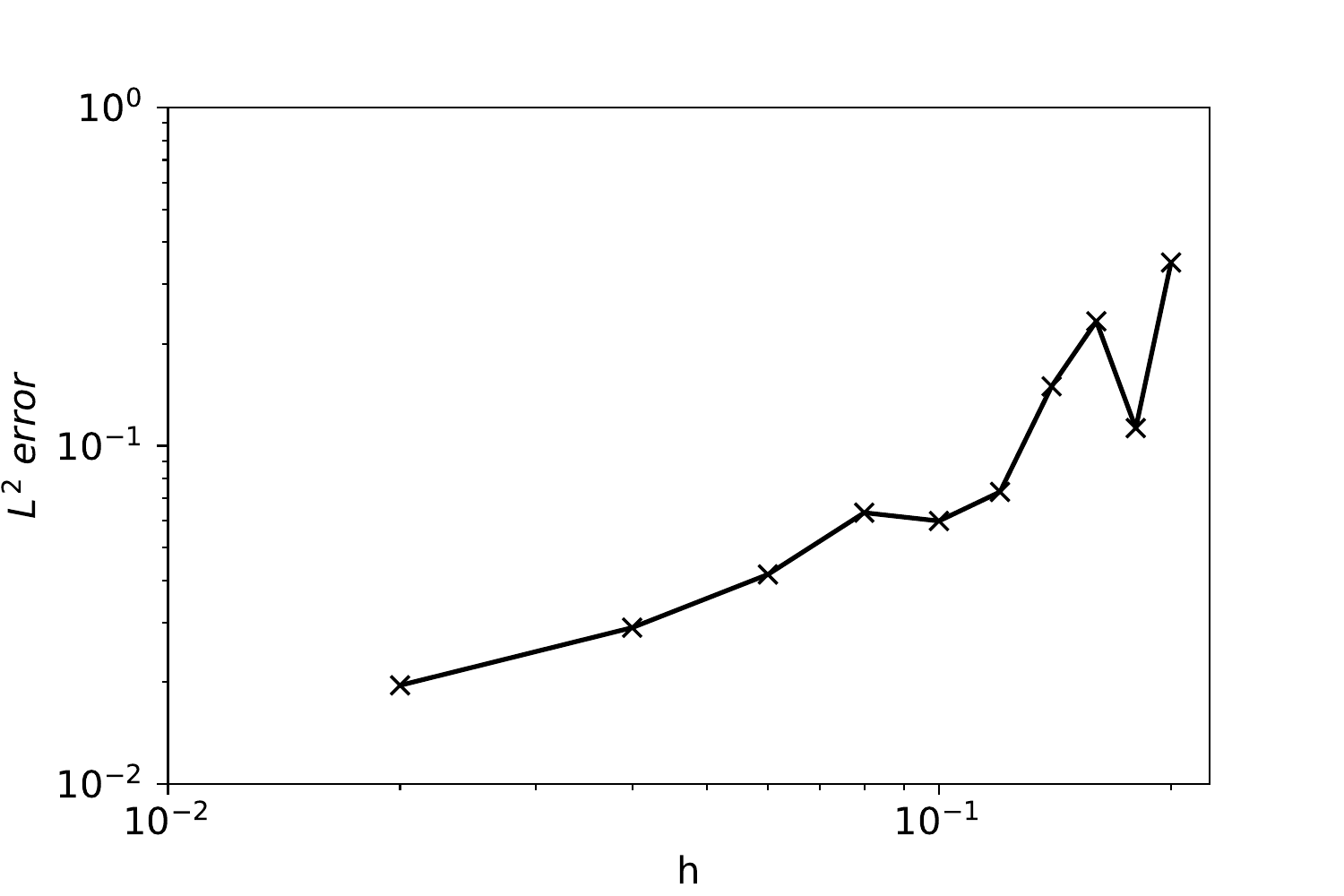}
\includegraphics[width=7.5cm]{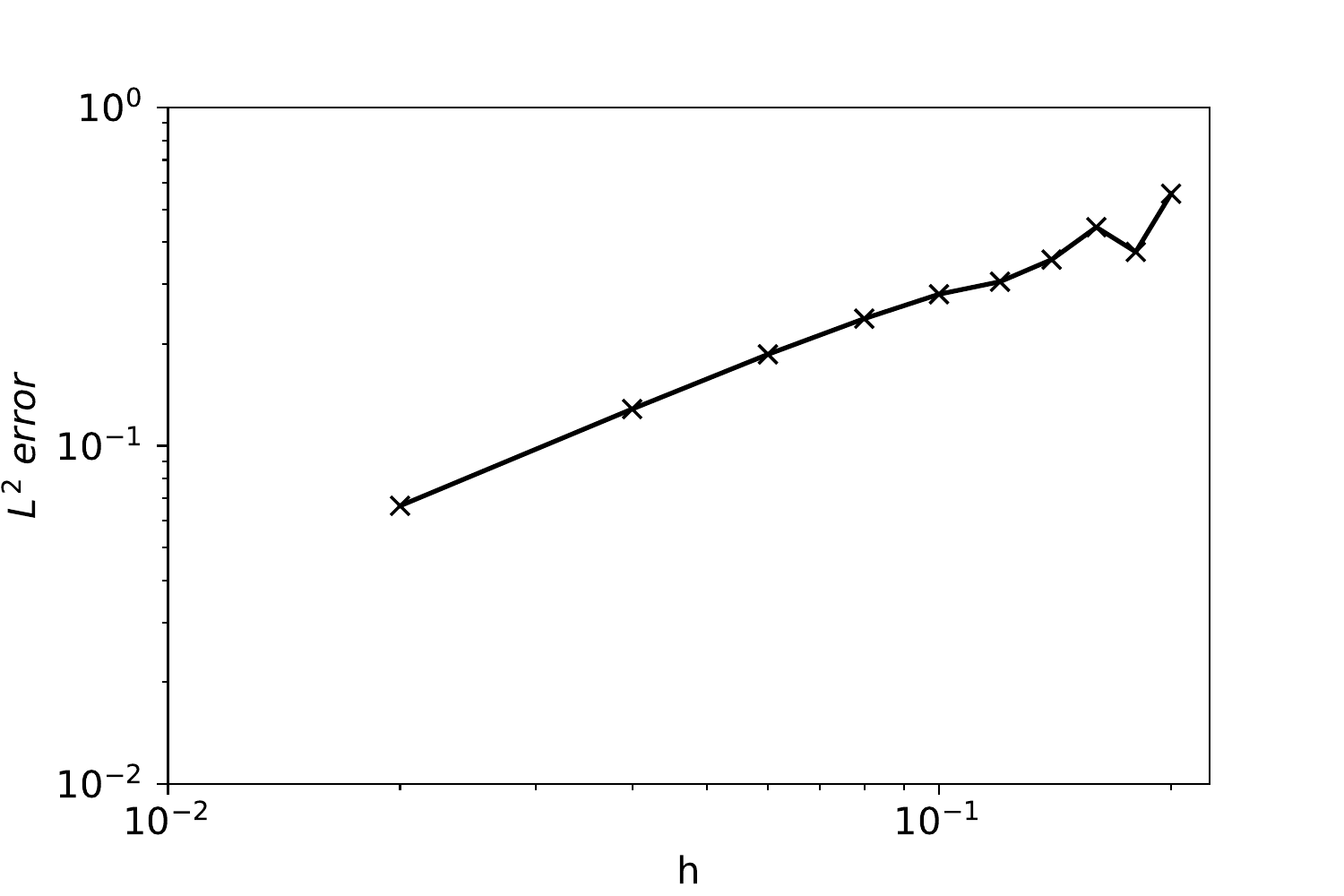}
\caption{Approximation error of the BEM for a Laplace--Dirichlet problem on the unit cube with continuous, piecewise linear basis functions on the primal mesh (left) and piecewise constant basis functions on the dual grid (right).}
\label{fig:cube_convergence}
\end{figure}

\section{Preconditioned adaptive BEM for Laplace problems}
\label{section:abem}

\noindent%
In this section, we describe the numerical implementation of the preconditioned boundary integral equation \eqref{eq:laplace_precond} and the corresponding ZZ-type error estimator. This forms the basis for our adaptive capacity algorithm described in Section \ref{section:capacity}.

The standard way of approximating Neumann data on polyhedral boundaries is the approximation through piecewise constant basis functions on the primal grid, given the discontinuity of Neumann data cross edges. Here, we choose a different approximation space, namely the space of piecewise constant functions on the dual grid. The advantage is that this space admits a stable duality pairing with continuous, piecewise affine functions on the primal grid, allowing us to use efficient operator preconditioning techniques based on the duality of the single-layer and hypersingular boundary operator, while at the same time making it possible to apply a ZZ-type error estimator as shown below.

Moreover, even though the piecewise constant basis on the dual grid is locally continuous across edges, it recovers optimal $h$-uniform convergence to solutions of Laplace--Dirichlet problems. An example is given in Figure~\ref{fig:cube_convergence}, which demonstrates the $h$-con\-ver\-gence in the dual basis to the Neumann data of a Laplace--Dirichlet problem on the unit cube with analytically known solution. The shown error is the relative $L^2$-error for the piecewise smooth Neumann data. The convergence is linear.

While a complete convergence analysis in the dual basis is beyond the merit of this paper, a simple heuristic argument is the following: The dual basis is continuous in a radius of diameter $h$ across an edge, and discontinuous globally. Hence, locally around an edge we have an error contribution that decreases with order $h$, while not propagating beyond due to the discontinuity of the basis. We therefore expect to recover the optimal convergence order $h$ for the $L^2$-error.

\begin{figure}[]
	 \centering
	 \psfrag{T0}{}
	 \psfrag{T1}{}
	 \psfrag{T2}{}
	 \psfrag{T3}{}
	 \psfrag{T4}{}
	 \psfrag{T12}{}
	 \psfrag{T34}{}
	 \includegraphics[width=0.9\textwidth]{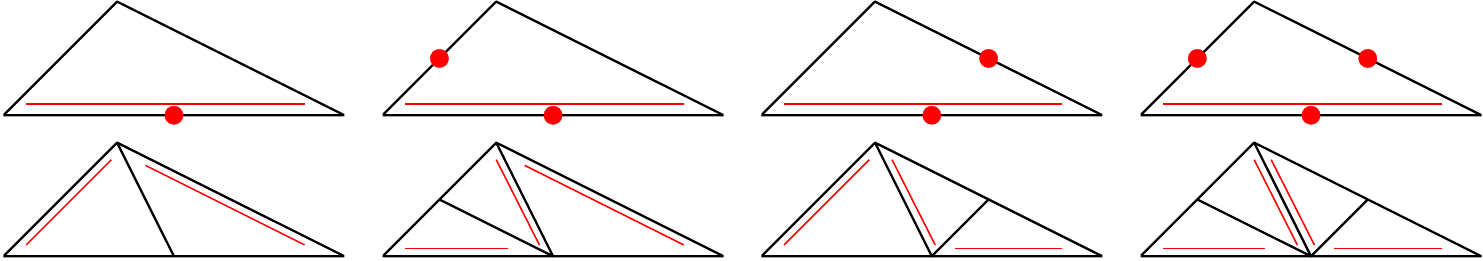} \quad
	 \caption{For each triangle $T \in \TT_\ell$, there is one fixed \textsl{reference edge},
	 indicated by the double line (left, top). Refinement of $T$ is done by bisecting
	 the reference edge, where its midpoint becomes a new vertex of the refined
	 triangulation $\TT_{\ell+1}$. The reference edges of the son triangles are opposite to
	 this newest vertex (left, bottom). To avoid hanging nodes, one proceeds as
	 follows: We assume that certain edges of $T$, but at least the reference edge,
	 are marked for refinement (top). Using iterated newest vertex bisection,
	 the element is then split into 2, 3, or 4 son triangles (bottom).}
	 \label{fig:nvb}
\end{figure} 
%
\subsection{Triangulations}

Throughout, let $\Omega\subset\mathbb{R}^3$ be a bounded polyhedral domain with closed boundary $\Gamma$. We consider conforming triangulations $\TT_{\ell} = \{T_{\ell,1}, \dots, T_{\ell,N_\ell}\}$ of $\Gamma$ into $N_{\ell} = \#\TT_\ell$ plane (closed) surface triangles $T_{\ell,j} \in \TT_{\ell}$. Let $\NN_\ell = \{z_{\ell,1}, \dots, z_{\ell,M_\ell}\}$ be the set of vertices of $\TT_\ell$, which contains the $M_\ell = \#\NN_\ell$ vertices. 

To obtain the dual mesh $\TT_\ell^{\rm dual}$, we connect the center of gravity of each element $T_{\ell,j} \in \TT_\ell$ with the midpoints of its edges. These lines define $M_\ell$ non-degenerate closed polygons $T_{\ell,j}^{\rm dual}$, which are collected in the \emph{dual mesh} $\TT_\ell^{\rm dual} = \{T_{\ell,1}^{\rm dual}, \dots, T_{\ell,M_\ell}^{\rm dual} \}$, where $\#\TT_\ell^{\rm dual} = M_\ell = \#\NN_\ell$. Note that each cell $T_{\ell,j}^{\rm dual} \in \TT_\ell^{\rm dual}$ contains precisely one node $z_{\ell,j} \in \NN_\ell$ in its interior, and we use the according numbering of $\TT_\ell^{\rm dual}$ and $\NN_\ell$, i.e., $z_{\ell,j} \in {\rm interior}(T_{\ell,j}^{\rm dual})$ for all $j = 1, \dots, M_\ell$; see Figure~\ref{fig:dual_element} (left).

Finally, we refine each element $T_{\ell,j} \in \TT_\ell$ into six triangles by connecting the center of gravity of $T_{\ell,j}$ with the midpoints of its edges as well as its vertices. This gives rise to the \emph{barycentric refinement} $\TT_\ell^{\rm bary} = \{T_{\ell,1}^{\rm bary}, \dots T_{\ell,6N_\ell}^{\rm bary}\}$. We note that $\#\TT_\ell^{\rm bary} = 6 \, \#\TT_\ell = 6 \, N_\ell$. Moreover, each element $T_{\ell,j} \in \TT_\ell$ and $T_{\ell,j}^{\rm dual} \in \TT_\ell^{\rm dual}$ is the union of elements in $\TT_\ell^{\rm bary}$, i.e.,
\begin{align*}
 T = \bigcup \set{T^{\rm bary} \in \TT_\ell^{\rm bary}}{T^{\rm bary} \subset T}
 \quad \text{for all } T \in \TT_\ell \cup \TT_\ell^{\rm dual};
\end{align*}
see Figure~\ref{fig:dual_element} (right). In other words, $\TT_{\ell}^{\rm bary}$ is the coarsest common refinement
of $\TT_\ell$  and $\TT_{\ell}^{\rm dual}$ into plane surface triangles.

\subsection{Discrete function spaces}

In this section, we collect the discrete spaces of functions $\Gamma \to \R$, which are employed below.

On $\TT_\ell^{\rm dual}$, we consider the space $\PP^0(\TT_\ell^{\rm dual})$ of $\TT_\ell^{\rm dual}$-piecewise constant functions. We choose the basis $\{\chi_{\ell,1}^{\rm dual}, \dots, \chi_{\ell,M_\ell}^{\rm dual}\}$ consisting of characteristic functions, i.e., $\chi_{\ell,j}^{\rm dual}(x) = 1$ on $T_{\ell,j}^{\rm dual}$ and zero otherwise.

On $\TT_\ell$, we consider the space $\SS^1(\TT_\ell)$ of $\TT_\ell$-piecewise affine functions, which are globally continuous.
We choose the usual nodal basis $\{\varphi_{\ell,1}, \dots, \varphi_{\ell,M_\ell}\}$, which consists of the hat functions characterized by $\varphi_{\ell,i}(z_{\ell,i}) = 1$ and $\varphi_{\ell,i}(z_{\ell,j}) = 0$ for $i \neq j$.

On $\TT_\ell^{\rm bary}$, we consider the space $\PP^0(\TT_\ell^{\rm bary})$ of $\TT_\ell^{\rm bary}$-piecewise constant functions. 
Again, we choose the basis $\{\chi_{\ell,1}^{\rm bary}, \dots, \chi_{\ell,6N_\ell}^{\rm bary}\}$ consisting of characteristic functions, i.e., $\chi_{\ell,j}^{\rm bary}(x) = 1$ on $T_{\ell,j}^{\rm bary}$ and zero otherwise.

\subsection{Galerkin discretization of the Laplace integral equation}

For general $f\in H^{1/2}(\Gamma)$ (and $f=1$ for the capacity problem), we consider the weakly-singular boundary integral equation  
\begin{align}\label{eq:strongform}
 V\phi(x) := \frac{1}{4\pi}\int_\Gamma \frac{\phi(y)}{|x-y|} \, d\Gamma(y)
 = f(x)
 \quad\text{for all } x\in\Gamma,
\end{align}
where $V:H^{-1/2}(\Gamma)\rightarrow H^{1/2}(\Gamma)$ is the Laplace single-layer integral operator.

We refer, e.g., to the monographs~\cite{mclean,hsiao-wendland,steinbach,sauter-schwab,eps_gwinner} for the following facts on the functional analytic framework:

Let $\dual{g}{\psi}_{\Gamma} := \int_{\Gamma} g(x) \psi(x) \, d\Gamma(x)$ be the $L^2(\Gamma)$-based duality pairing between functions $g \in H^{1/2}(\Gamma)$ and $\psi \in H^{-1/2}(\Gamma)$. We can reformulate \eqref{eq:strongform} as variational problem in the form
\begin{align}\label{eq:weakform}
 \dual{V\phi}{\psi}_\Gamma = \dual{f}{\psi}_\Gamma
 \quad\text{for all }\psi\in H^{-1/2}(\Gamma).
\end{align}
From the ellipticity of $V$ in $H^{-1/2}(\Gamma)$ with respect to $\dual{\cdot}{\cdot}_{\Gamma}$, the unique solvability of~\eqref{eq:weakform} follows by the Lax--Milgram lemma. In particular, we note that $\enorm{\psi} := \dual{V\psi}{\psi}_\Gamma^{1/2} \simeq \norm{\psi}{H^{-1/2}(\Gamma)}$ defines the equivalent energy norm on $H^{-1/2}(\Gamma)$.
Here and throughout, $\lesssim$ abbreviates $\le$ up to some generic multiplicative constant, and $\simeq$ abbreviates that both estimates $\lesssim$ and $\gtrsim$ hold.

Since $\PP^0(\TT_\ell^{\rm dual}) \subset H^{-1/2}(\Gamma)$, we can discretize~\eqref{eq:weakform} by 
a Galerkin discretization~\cite{steinbach,sauter-schwab}: Find $\Phi_\ell^{\rm dual} \in \PP^0(\TT_\ell^{\rm dual})$ such that
\begin{align}\label{eq:discreteform}
 \dual{V\Phi_\ell^{\rm dual}}{\Psi_\ell^{\rm dual}}_\Gamma = \dual{f}{\Psi_\ell^{\rm dual}}_\Gamma
 \quad\text{for all }\Psi_\ell^{\rm dual} \in \PP^0(\TT_\ell^{\rm dual}).
\end{align}%
Again, unique solvability of~\eqref{eq:discreteform} follows by the Lax--Milgram lemma. 
Moreover, the latter is equivalent to the linear system of equations
\begin{equation}
\label{eq:laplace_system}
\matrix{V}_{\ell}^{\rm dual}\matrix{x}_{\ell} = \matrix{f}_{\ell}^{\rm dual}
\end{equation}
with the symmetric and positive definite matrix $\matrix{V}_{\ell}^{\rm dual} \in \mathbb{R}^{M_\ell\times M_\ell}_{\rm sym}$ and given right-hand side $\matrix{f_{\ell}}^{\rm dual} \in \R^{M_\ell}$ defined as $\matrix{V}_{\ell}^{\rm dual}[i, j] = \dual{V\chi_{\ell,j}^{\rm dual}}{\chi_{\ell,i}^{\rm dual}}_{\Gamma}$ and $\matrix{f}_\ell^{\rm dual}[i] = \dual{f}{\chi_{\ell,i}^{\rm dual}}_{\Gamma}$. Then, the unique solution $\Phi_\ell^{\rm dual} \in \PP^0(\TT_\ell^{\rm dual})$ of~\eqref{eq:discreteform} satisfies
$\Phi_{\ell}^{\rm dual} = \sum_{j} \matrix{x}_{\ell}[j] \chi_{\ell,j}^{\rm dual}$, 
where $\matrix{x}_{\ell} \in \R^{M_\ell}$ is the unique solution to the linear system~\eqref{eq:laplace_system}.

\subsection{ZZ-type \textsl{a~posteriori} error estimation}
\label{section:ZZ}

Having computed the Galerkin solution $\Phi_\ell^{\rm dual} \in \PP^0(\TT_\ell^{\rm dual})$, we define $I_\ell \Phi_\ell^{\rm dual} := \sum_{j} \matrix{x}_{\ell}[j] \varphi_{\ell,j} \in \SS^1(\TT_\ell)$ as well as the ZZ-type error indicators~\eqref{eq:zz:T} for all $T \in \TT_\ell$.
For model problems with known solution, our numerical experiments led to the evidence that 
\begin{align}
 \sum_{T \in \TT_\ell} \eta_\ell(T)^2
 = \norm{h_\ell^{1/2}(1-I_\ell)\Phi_\ell^{\rm dual}}{L^2(\Gamma)}^2
 \simeq \enorm{\phi - \Phi_\ell^{\rm dual}}^2
 \simeq \norm{\phi - \Phi_\ell^{\rm dual}}{H^{-1/2}(\Gamma)}^2,
\end{align}
where $h_\ell : \Gamma \to \R$ is the $\TT_\ell$-piecewise mesh-size function $h_\ell |_T = \diam(T)$. A thorough mathematical proof goes beyond of the scope of the present work. We note that the efficiency estimate $\lesssim$ can be obtained with the scaling techniques from~\cite{ffkp14}, while the more important reliability estimate $\gtrsim$ remains open.

\subsection{A generic adaptive strategy}

With the ZZ-type error estimator at hand, we consider the 
following generic adaptive algorithm, which locally adapts the primal mesh $\TT_\ell$, while the Galerkin approximations $\Phi_\ell^{\rm dual}$ are computed on the corresponding dual mesh $\TT_\ell^{\rm dual}$. For the local mesh-refinement, we employ 2D newest vertex bisection (NVB); see, e.g.,~\cite{kpp13,stevenson07} and Figure~\ref{fig:nvb} for some illustration. We note that NVB provides rich mathe\-matical structure and, in particular, guarantees that all adaptive meshes are conforming and uniformly shape regular (i.e., generated triangles $T$ cannot deteriorate).

\begin{algorithm}\label{algorithm}
\textbf{Input:} Initial conforming triangulation $\TT_0$ of $\Gamma$ into plane surface triangles, adaptivity parameters $0<\theta\le1$ and $\Cmark\ge1$.
\\
\textbf{Adaptive loop.} Iterate the following steps~{\rm(i)}--{\rm(iv)} for all $\ell=0,1,2,\dots$:
\begin{itemize}
\item[\rm(i)] Build the dual mesh $\TT_\ell^{\rm dual}$ and compute the solution $\Phi_\ell^{\rm dual} \in \PP^0(\TT_\ell^{\rm dual})$ of~\eqref{eq:discreteform}.  
\item[\rm(ii)] Compute the local contributions $\eta_\ell(T)$ from~\eqref{eq:zz:T} for all $T\in\TT_\ell$.
\item[\rm(iii)] Determine a set of, up to the multiplicative constant $\Cmark$, minimal cardinality, which satisfies the D\"orfler marking criterion~\cite{doerfler96}
\begin{align}\label{eq:doerfler}
 \theta\,\sum_{T \in \TT_\ell} \eta_\ell(T)^2 \le \sum_{T\in\MM_\ell}\eta_\ell(T)^2,
\end{align}
i.e., the local contributions associated with $\MM_\ell$ control a fixed percentage of $\eta_\ell^2$.
\item[\rm(vi)] Use NVB to create the coarsest conforming triangulation $\TT_{\ell+1} :=\refine(\TT_\ell,\MM_\ell)$, where all marked elements $T\in\MM_\ell$ have been bisected.
\end{itemize}
\textbf{Output:} Sequence of successively refined triangulations $\TT_\ell$ as well as corresponding Galerkin solutions $\Phi_\ell^{\rm dual} \in \PP^0(\TT_\ell^{\rm dual})$
and ZZ-type error estimators $\eta_\ell$.\qed
\end{algorithm}

The code of a Python implementation of Algorithm~\ref{algorithm} based o the Bempp library is given in Appendix~\ref{section:app:code}.
Similar algorithms driven by computationally expensive residual error estimators have been mathematically analyzed in recent years~\cite{gantumur,fkmp,fkmp:part1,fkmp:part2,abem+solve}. 
We refer to the appendix for details on available results.

\begin{remark}
	\begin{itemize}		
	\item[\rm(i)] In practice, the linear system~\eqref{eq:laplace_system} is solved iteratively. In our implementation, (the preconditioned) GMRES solver
	is stopped, if the iterates $\Phi^{\rm dual}_{\ell,k} \approx \Phi^{\rm dual}_{\ell}$ satisfy that 
	$\enorm{\Phi^{\rm dual}_{\ell,k-1} -\Phi^{\rm dual}_{\ell,k}} \leq \lambda \eta_\ell(\Phi^{\rm dual}_{\ell,k})$ where $\lambda \approx 10^{-3}$. 
	Here, $ \eta_\ell(\Phi^{\rm dual}_{\ell,k})$ denotes the ZZ-error estimator~\eqref{eq:zz:T} evaluated at the inexact solution $\Phi^{\rm dual}_{\ell,k}$
	instead of the exact Galerkin solution $\Phi^{\rm dual}_{\ell}$. We refer to the recent work~\cite{abem+solve} for a thorough mathematical analysis of the stopping criterion.
	
	\item[\rm(ii)]
	Since the BEM matrix $V_\ell^{dual}$ is symmetric, it would be possible
	to employ a preconditioned CG algorithm (PCG) for the iterative solution of~\eqref{eq:laplace_system}. However, generic
	implementations of PCG are restricted to symmetric preconditioners. Since we employ an
	operator preconditioning with unsymmetric mass matrix (see Section~\ref{subsection:preconditioning} below) and since the
	BEM++ library~\cite{bempp} also treats non-symmetric BEM formulations, our implementation uses a generic implementation of precondioned GMRES.
    
	\end{itemize} 
\end{remark}

\begin{remark}
\label{remak:costs}
In the numerical experiments below, we compare our adaptive algorithm (with BEM solution $\Phi_\ell^{\rm dual} \in \PP^0(\TT_\ell^{\rm dual})$ with a standard adaptive algorithm (with BEM solution $\Phi_\ell \in \PP^0(\TT_\ell)$ on the primal mesh) driven by a residual error estimator. In order to compute the weighted-residual error estimator for lowest order BEM, one can do the following: 
Instead of assembling up the discrete single-layer operator $\matrix{V}_\ell^{\PP^0}$ on $\PP^0(\TT_\ell)$ , 
we assemble the operator $\matrix{V}_\ell^{\PP^1}$ on $\PP^1(\TT_\ell)$. 
By applying sparse projection operators onto $\matrix{V}_\ell^{\PP^1}$, we obtain the matrix $\matrix{V}_\ell^{\PP^0}$ as well as the evaluation the residual in $\PP^1(\TT_\ell)$
to compute the residual error estimator. 
Since the change of discrete spaces from $\PP^1(\TT_\ell)$ to $\SS^1(\TT_\ell)$ increases the degrees of freedom by a factor $3$, 
the computational costs grow at least by the same factor.  

On the other hand, the proposed Algorithm~\ref{algorithm} with the ZZ-type error estimator requires to 
assemble the discrete single-layer operator $\matrix{V}_\ell^{{\rm bary}}$ on $\PP^0(\TT_\ell^{\rm bary})$, which increases the degrees of freedom by a factor $6$, 
but already includes the operator preconditioning; see Section~\ref{subsection:preconditioning} for further details. 
Hence, using a suitable implementation with FMM or $\mathcal{H}$-Matrices, Algorithm~\ref{algorithm} with an built-in operator preconditioning
just leads to double assembling costs compared to a non-preconditioned adaptive scheme based on the residual error estimator.

\end{remark}%

\subsection{Operator preconditioning with the hypersingular operator}
\label{subsection:preconditioning}

While the integral equation \eqref{eq:laplace_system} can be solved via dense LU decomposition for smaller system sizes, larger problems require iterative methods, in particular, if BEM acceleration techniques such as $\mathcal{H}$-matrices~\cite{bebendorf2008,boerm2010,hackbusch2015} or FMM~\cite{gr1997,osw2006,gr2009} are used. An efficient preconditioning strategy is based on operator preconditioning with the hypersingular operator. We recall from the literature~\cite{mclean,hsiao-wendland} that the hypersingular operator $D: H^{1/2}(\Gamma)\rightarrow H^{-1/2}(\Gamma)$ is the negative normal derivative of the double-layer boundary operator, i.e.,
$$
\left[Dv\right](x) := -\frac{\partial}{\partial \nu(x)}\int_{\Gamma}\frac{\langle \nu(y), x-y\rangle}{4\pi \, |x-y|^3} \, v(y) \, d\Gamma(y).
$$
For $v, w\in H^{1/2}(\Gamma)$, it follows from integration by parts that~\cite{steinbach,sauter-schwab,eps_gwinner}
$$
\langle Dv, w\rangle = \frac{1}{4\pi}\int_{\Gamma}\int_{\Gamma}\frac{\langle \text{\underline{curl}}_{\Gamma}v(y), \text{\underline{curl}}_{\Gamma}w(x)\rangle}{|x-y|}\, d\Gamma(y).
$$
In case of the Laplace equation and $\Gamma$ being connected, the hypersingular operator has a one-dimensional nullspace consisting of the constant functions on $\Gamma$. In order to obtain a suitable preconditioner, we therefore introduce the regularized operator
$$
\left[D^{\rm reg} v\right](x) := \left[Dv\right](x) + \int_{\Gamma}v(x)\, d\Gamma(x).
$$
This operator shifts the zero eigenvalue for constant functions to the value $|\Gamma|$ and otherwise does not change the spectrum.
In~\cite{StWe98}, it was shown that the regularized hypersingular operator is an effective preconditioner for the single-layer boundary operator. We use the regularized hypersingular operator as a right preconditioner in \eqref{eq:laplace_system} and in addition, an inverse mass matrix as a left preconditioner to arrive at
\begin{equation}
\label{eq:laplace_precond_discrete}
\matrix{M}_\ell^{-\top} \matrix{V}_{\ell}^{\rm dual} \matrix{M}_{\ell}^{-1} \matrix{D_{\ell}^{\rm reg}} \matrix{\tilde{x}}_{\ell} = \matrix{M}_\ell^{-\top} \matrix{f}_{\ell}^{\rm dual}
\end{equation}
with $\matrix{x_{\ell}} = \matrix{M}_\ell^{-1}\matrix{D_{\ell}^{\rm reg}}\matrix{\tilde{x}_{\ell}}$, 
$\matrix{D}_{\ell}^{\rm reg}[i, j] = \dual{D_{\ell}^{\rm reg}\phi_{\ell, j}}{\phi_{\ell, i}}$, $\matrix{M_{\ell}}[i, j]^{\top} = \dual{\phi_{\ell, j}}{\chi_{\ell, i}^{\rm dual}}$, and the other quantities defined as before.
The implementation of~\eqref{eq:laplace_precond_discrete} only requires the assembly of the matrix $\matrix{V}_{\ell}^{\rm bary} \in \R^{6N_\ell \times 6N_\ell}$ associated with the single-layer boundary operator 
on the space 
$\mathcal{P}^{0}(\mathcal{T}_{\ell}^{\rm bary})$,
i.e., $\matrix{V}_{\ell}^{\rm bary}[i,j] = \dual{V\chi_{\ell,j}^{\rm bary}}{\chi_{\ell,i}^{\rm bary}}_\Gamma$. The matrices $\matrix{V}_{\ell}^{\rm dual}$ and $\matrix{D_{\ell}^{\rm reg}}$ can then be obtained through simple sparse projection operators. To obtain $\matrix{V}_{\ell}^{\rm dual}$, we notice that the basis functions $\chi_{\ell,j}^{\rm dual}$ in $\mathcal{P}^{0}(\mathcal{T}_{\ell}^{\rm dual})$ can be written as a linear combinations of the basis $\chi_{\ell,j}^{\rm bary}$ in $\mathcal{P}^{0}(\mathcal{T}_{\ell}^{\rm bary})$. Let $\matrix{P}_{\ell} \in \R^{6N_\ell \times M_\ell}$ be the sparse matrix that maps coefficients of basis functions in $\mathcal{P}^{0}(\mathcal{T}_{\ell}^{\rm dual})$ to a vector of coefficients of the associated basis functions in $\mathcal{P}^{0}(\mathcal{T}_{\ell}^{\rm bary})$. Then,
$$
\matrix{V}_{\ell}^{\rm dual} = \matrix{P}_{\ell}^\top \matrix{V}_{\ell}^{\rm bary} \matrix{P}_\ell.
$$
Similarly, we recognize that we can write
$$
\matrix{D_{\ell}^{\rm reg}} = \sum_{m=1}^3\matrix{Q}_{\ell, m}^\top\matrix{V}_{\ell}^{\rm bary}\matrix{Q}_{\ell, m},
$$
where $\matrix{Q}_{\ell, m} \in \R^{6N_\ell \times M_\ell}$ maps coefficient vectors in $\SS^1(\mathcal{T}_{\ell})$ to the piecewise constant $m$-th component of the corresponding surface curl operator, which can be represented as a linear combination of basis functions in $\mathcal{P}^0(\mathcal{T}_{\ell}^{\rm bary})$. 

Putting everything into \eqref{eq:laplace_precond_discrete}, we obtain the discrete system
$$
\matrix{M_\ell^{-\top}}\matrix{P}_{\ell}^\top\matrix{V}_{\ell}^{\rm bary} \matrix{P}_\ell\matrix{M_{\ell}^{-1}}\sum_{m=1}^3\matrix{Q}_{\ell, m}^\top\matrix{V}_{\ell}^{\rm bary}\matrix{Q}_{\ell, m}\matrix{\tilde{x}}_{\ell} = \matrix{M_\ell^{-\top}}\matrix{f}_{\ell}^{\rm dual}.
$$
Hence, we only require the assembly of the operator $\matrix{V}_{\ell}^{\rm bary}$ and four matrix-vector products with this operator for each evaluation of the left-hand side. Assuming a fast method with a linear or log-linear complexity for assembly and matrix-vector product, the total cost of the assembly is therefore six times as high as that of the non-preconditioned system and each matrix-vector product about 24 times as expensive as that for a non-preconditioned system. However, we will see later that this preconditioner is very effective and only a small number of matrix-vector products will be required in our numerical experiments. Moreover, the error estimation essentially comes for free as part of this preconditioning strategy; see Section~\ref{section:ZZ}.

The associated sparse matrix operations with $\matrix{P}_{\ell}$ and $\matrix{Q}_{\ell}$ are cheap and negligible compared to the evaluation of the integral operators. Using a modern sparse direct solver, also the cost of the LU decomposition of the sparse mass matrix $\matrix{M}_{\ell}$ and its application is reasonably cheap.

We note that the analysis of~\cite{StWe98} is restricted to quasi-uniform meshes. However, we refer to~\cite{hju2014} for operator preconditioning for 2D BEM on graded meshes.
The extension of the latter result to 3D is beyond the scope of the present paper.

\subsection{Numerical computation of electrostatic capacity}
\label{section:capacity}
As outlined in the introduction, the capacity of $\Omega$ can be computed by
\begin{align}\label{eq:cap}
 \capacity(\Omega) = \frac{1}{4\pi} \, \dual{\phi}{1}_\Gamma,
 \text{ where $\phi\in H^{-1/2}(\Gamma)$ solves }
 V\phi = 1 \text{ on }\Gamma=\partial\Omega.
\end{align}
This yields $\capacity(\Omega) = \frac{1}{4\pi} \, \dual{V^{-1}1}{1}_\Gamma$, and ellipticity of $V^{-1}$ guarantees $\capacity(\Omega)>0$. Let $\Phi_\ell^{\rm dual}\in \PP^0(\TT_\ell^{\rm dual})$ be the Galerkin approximation~\eqref{eq:discreteform} of $\phi=V^{-1}1$, which is obtained by Algorithm~\ref{algorithm} for $f=1$. Then, a natural approximation of the capacity~\eqref{eq:cap} is
\begin{align}\label{eq:cap:ell}
 \capacity_\ell(\Omega) := \frac{1}{4\pi} \dual{\Phi_\ell^{\rm dual}}{1}_\Gamma.
\end{align}
We note that this approximation is controlled by the energy error. 

\begin{proposition}\label{proposition:capacity}
There holds $0 < \capacity_\ell(\Omega) \le \capacity(\Omega)$ as well as
\begin{align}\label{eq:prop:cap1}
4 \pi \,(\capacity(\Omega) - \capacity_\ell(\Omega)) 
 = \enorm{\phi-\Phi_\ell^{\rm dual}}^2
 \simeq \norm{\phi-\Phi_\ell^{\rm dual}}{H^{-1/2}(\Gamma)}^2.
\end{align}
%
%
\end{proposition}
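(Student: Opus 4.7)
My plan is to reduce everything to the standard Galerkin ``Pythagoras'' identity for symmetric, elliptic problems, with the twist that both the continuous and the discrete capacity are themselves energies of the respective solutions.

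First I would rewrite the continuous capacity as an energy: since $V\phi=1$ and $V$ is symmetric,
\begin{align*}
 4\pi\,\capacity(\Omega) = \dual{\phi}{1}_\Gamma = \dual{\phi}{V\phi}_\Gamma = \enorm{\phi}^2.
\end{align*}
Next, I would test the discrete equation~\eqref{eq:discreteform} (with $f=1$) against $\Psi_\ell^{\rm dual} = \Phi_\ell^{\rm dual} \in \PP^0(\TT_\ell^{\rm dual})$ to obtain the discrete analogue
\begin{align*}
 4\pi\,\capacity_\ell(\Omega) = \dual{\Phi_\ell^{\rm dual}}{1}_\Gamma = \dual{V\Phi_\ell^{\rm dual}}{\Phi_\ell^{\rm dual}}_\Gamma = \enorm{\Phi_\ell^{\rm dual}}^2.
\end{align*}

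For the main identity in~\eqref{eq:prop:cap1}, Galerkin orthogonality yields $\dual{V(\phi-\Phi_\ell^{\rm dual})}{\Phi_\ell^{\rm dual}}_\Gamma = 0$, so expanding the energy of the error gives the Pythagoras identity
\begin{align*}
 \enorm{\phi - \Phi_\ell^{\rm dual}}^2 = \enorm{\phi}^2 - \enorm{\Phi_\ell^{\rm dual}}^2 = 4\pi\,\bigl(\capacity(\Omega) - \capacity_\ell(\Omega)\bigr).
\end{align*}
The equivalence with $\norm{\phi-\Phi_\ell^{\rm dual}}{H^{-1/2}(\Gamma)}^2$ is then immediate from the norm equivalence $\enorm{\cdot}\simeq\norm{\cdot}{H^{-1/2}(\Gamma)}$ already recorded in the paper, and $\capacity_\ell(\Omega)\le\capacity(\Omega)$ is a byproduct of the identity together with $\enorm{\phi-\Phi_\ell^{\rm dual}}^2\ge 0$.

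It remains to argue strict positivity $\capacity_\ell(\Omega)>0$. By the identity from Step~2, this reduces to $\Phi_\ell^{\rm dual}\neq 0$. Since the constant function $1$ lies in $\PP^0(\TT_\ell^{\rm dual})$, testing~\eqref{eq:discreteform} against $\Psi_\ell^{\rm dual}=1$ gives $\dual{V\Phi_\ell^{\rm dual}}{1}_\Gamma = \dual{1}{1}_\Gamma = |\Gamma| > 0$, forcing $\Phi_\ell^{\rm dual}\neq 0$ and hence $\enorm{\Phi_\ell^{\rm dual}}^2>0$ by ellipticity of $V$. I do not expect any genuine obstacle: the proof is a direct application of symmetry of $V$ and Galerkin orthogonality, and the only place that needs a small extra remark is the strict positivity.
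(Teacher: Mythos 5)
Your proof is correct and, for the main identity~\eqref{eq:prop:cap1}, takes essentially the same route as the paper: Galerkin orthogonality is the key ingredient in both. The paper writes the argument as a direct chain
$4\pi(\capacity - \capacity_\ell) = \dual{\phi - \Phi_\ell^{\rm dual}}{1}_\Gamma = \dual{\phi - \Phi_\ell^{\rm dual}}{V\phi}_\Gamma = \enorm{\phi - \Phi_\ell^{\rm dual}}^2$,
while you first record $4\pi\capacity(\Omega)=\enorm{\phi}^2$ and $4\pi\capacity_\ell(\Omega)=\enorm{\Phi_\ell^{\rm dual}}^2$ and then invoke the Pythagoras identity; this is the same computation reorganized, and $\capacity_\ell\le\capacity$ drops out in both cases. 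The only place where you genuinely diverge from the paper is the strict positivity $\capacity_\ell(\Omega)>0$: the paper passes to the matrix level, writing $4\pi\capacity_\ell = \boldsymbol{1}_\ell^{\rm dual}\cdot(\mathbf{V}_\ell^{\rm dual})^{-1}\boldsymbol{1}_\ell^{\rm dual}$ and invoking symmetric positive definiteness of $\mathbf{V}_\ell^{\rm dual}$, whereas you stay at the variational level and test~\eqref{eq:discreteform} against the constant $1\in\PP^0(\TT_\ell^{\rm dual})$ to conclude $\Phi_\ell^{\rm dual}\neq 0$, hence $\enorm{\Phi_\ell^{\rm dual}}^2>0$ by ellipticity. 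Both positivity arguments are valid; yours has the small advantage of not leaving the operator-theoretic framework, while the paper's is arguably more explicit about what the assembled linear system looks like.
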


\begin{proof}
Define $\boldsymbol{1}_\ell^{\rm dual} \in \R^{M_\ell}$ by $\boldsymbol{1}_\ell^{\rm dual}[j] = \dual{1}{\chi_{\ell,j}^{\rm dual}}_\Gamma = |T_{\ell,j}^{\rm dual}| > 0$. Then, it holds that 
\begin{align*}
 \Phi_\ell^{\rm dual} = \sum_{j = 1}^{M_\ell} \mathbf{x}_\ell[j] \chi_{\ell,j}^{\rm dual},
 \quad \text{where} \quad
 \mathbf{x}_\ell := (\mathbf{V}_\ell^{\rm dual})^{-1} \boldsymbol{1}_\ell^{\rm dual}.
\end{align*}
In particular, $4\pi \, \capacity_\ell(\Omega) = \boldsymbol{1}_\ell^{\rm dual} \cdot \mathbf{x}_\ell = \boldsymbol{1}_\ell^{\rm dual} \cdot (\mathbf{V}_\ell^{\rm dual})^{-1} \boldsymbol{1}_\ell^{\rm dual}$.
Since the matrix $\mathbf{V}_\ell^{\rm dual}$ is symmetric and positive definite (and thus also its inverse $(\mathbf{V}_\ell^{\rm dual})^{-1}$), it follows that $\capacity_\ell(\Omega) > 0$.
To see~\eqref{eq:prop:cap1}, recall that the combination of~\eqref{eq:weakform}--\eqref{eq:discreteform} yields the Galerkin orthogonality $\dual{V(\phi-\Phi_\ell^{\rm dual})}{\Psi_\ell^{\rm dual}}_\Gamma = 0$ for all $\Psi_\ell^{\rm dual} \in \PP^0(\TT_\ell^{\rm dual})$. This leads to
\begin{align*}
 &4 \pi \, \big( \capacity(\Omega) - \capacity_\ell(\Omega) \big)
 = \dual{\phi - \Phi_\ell^{\rm dual}}{1}_\Gamma
 =\dual{\phi-\Phi_\ell^{\rm dual}}{V\phi}_\Gamma  
 \\& \quad
 = \dual{V(\phi-\Phi_\ell^{\rm dual})}{\phi-\Phi_\ell^{\rm dual}}_\Gamma
 = \enorm{\phi-\Phi_\ell^{\rm dual}}^2
 \ge0.
\end{align*}
This proves~\eqref{eq:prop:cap1} and, in particular, $\capacity_\ell(\Omega)\le\capacity(\Omega)$.
%
%
%
\end{proof}

\section{Numerical computations}
\label{section:computations}

In this section, we present some numerical computations that underpin our theoretical findings. 
In the experiments, we compare the performance of 
\begin{itemize}
\item Algorithm~\ref{algorithm} (with Galerkin solution $\Phi_\ell^{\rm dual} \in \PP^0(\TT_\ell^{\rm dual})$ on the dual mesh and driven by the proposed ZZ-type estimator $\eta_\ell$),
\item a standard adaptive algorithm (see Appendix~\ref{section:app:residual}) from~\cite{fkmp,fkmp:part1} with Galerkin approximation $\Phi_\ell \in \PP^0(\TT_\ell)$ on the primal mesh $\TT_\ell$ and driven by the weighted-residual error estimator, 
\end{itemize}
for different adaptivity parameters $0 < \theta \leq 1$. One particular focus is on preconditioners, where we compare
\begin{itemize}
\item Algorithm~\ref{algorithm} with its built-in operator preconditioning vs.\ diagonal preconditioning vs.\ non-preconditioning;
\item Algorithm~\ref{algorithm:res} with multilevel additive Schwarz preconditioning (MLAS)~\cite{abem+solve} vs.\ diagonal preconditioning~\cite{gm06} vs.\ non-preconditioning;
\end{itemize}
We consider lowest-order BEM for different polyhedral domains $\Omega$ starting from simple shapes, e.g., the unit cube (Section~\ref{subsection:cube}), and moving to more complicated shapes, e.g., a star-shaped domain (Section~\ref{subsection:star}). If not stated otherwise, we employ Algorithm~\ref{algorithm} with the D\"orfler parameter $\theta = 1/2$ and operator preconditioning.


We emphasize that due generic edge singularities, increasing the polynomial degree, without using anisotropic refinement, does not lead to any improved order of convergence. 
Usually one observes a slightly better constant, but the same overall algebraic rate; see, e.g.,~\cite{pointabem}.

The numerical computations were done with help
of Bempp, which is an open-source Galerkin boundary element library. 
We refer to~\cite{bempp,bempp2,bempp3} for details on Bempp. A possible implementation 
of Algorithm~\ref{algorithm} in Python with Bempp is given in Appendix~\ref{section:app:code}.

\subsection{Example (Unit cube)}
\label{subsection:cube}

We consider the unit cube $\Omega = (0,1)^3$, 
where a reference value of the capacity $\capacity(\Omega)$ is known; see Figure~\ref{fig:cube:inital:mesh}.
To our knowledge, the approximation $\capacity(\Omega) \approx 0.66067815409957$ from~\cite[Table 1]{hp13} with an error of order $10^{-13}$ is the best approximation available. 
This value is used to compute the capacity error $\err_\ell := |\capacity -\capacity_\ell|$.


\begin{figure}[h!]
	\centering
	\includegraphics[width=0.42\textwidth]{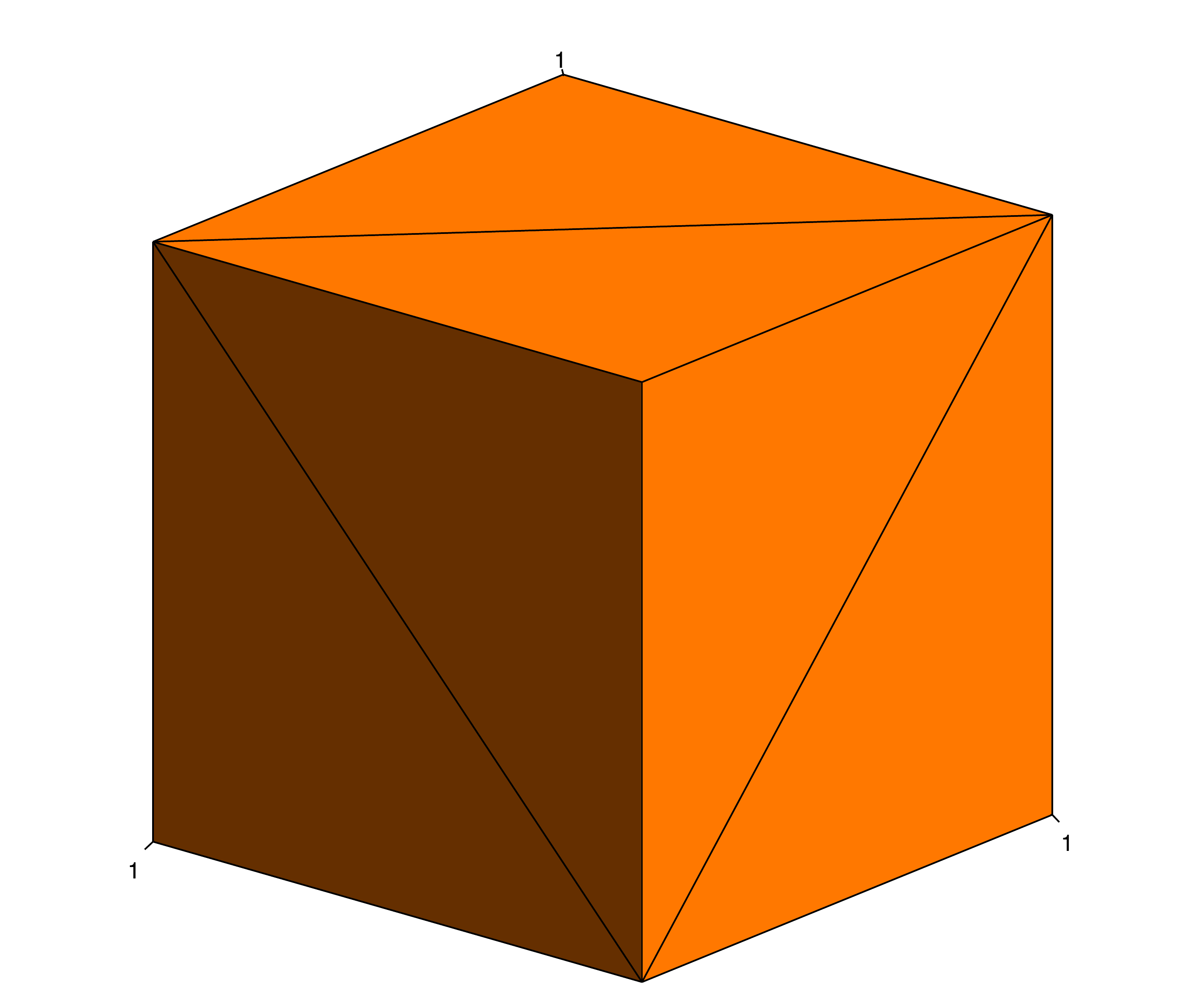} 
    \caption{Example~\ref{subsection:cube}: Initial mesh $\TT_0$ with $12$ elements. }
	\label{fig:cube:inital:mesh}   
\end{figure}

Figure~\ref{fig:cube:convergence} shows the convergence rates of the estimator $\eta_\ell^2$ and the capacity error $\err_\ell$ plotted over the number of elements $\# \TT_\ell$.
For both adaptive algorithms, the error as well as the estimator converge with rate $\OO(N^{-1})$. 
This underlines empirically that both error estimators are reliable and efficient, i.e., up to multiplicative constants equivalent to the error 
	(i.e., $\eta_\ell^2 \simeq \err = \enorm{\phi-\Phi_\ell}^2$).
Although the empirical reliability constant $\err_\ell / \eta_\ell^2$ is slightly better for Algorithm~\ref{algorithm:res} and the residual estimator,
Algorithm~\ref{algorithm} and ZZ-type estimator lead to a smaller error 
for the capacity approximation. 
Figure~\ref{fig:cube:theta} compares different values of the D\"orfler marking parameter 
$\theta \in \{0.2,0.4,0.6,0.8,1\}$ in Algorithm~\ref{algorithm}, where $ \theta = 1$ coincides with uniform
mesh-refinement.
We see that Algorithm~\ref{algorithm} is robust and any choice of $\theta <1$ leads to the optimal convergence behavior. Only uniform mesh-refinement ($\theta = 1$) leads to a sub-optimal rate $\OO(N^{-2/3})$. 
Table~\ref{table:cube:tab} displays some numerical results for the sequence of solutions produced by Algorithm~\ref{algorithm}.
It shows that, e.g., Step $13$ with $930$ elements and 12 iterations is sufficient to reach an accuracy of $5.520\cdot 10^{-4}$. 
Such a computation can be done on a usual workstation within seconds.

\bigskip

\begin{table}[h!]
\begin{tabular}{|c|c|c|c|c|c|c|}
\hline
$\ell$ & $\# \TT_\ell$ & $\capacity_\ell$ & $\eta_\ell^2$  &  $\err_\ell = |\capacity - \capacity_\ell|$ & $\err_\ell / \eta_\ell^2$ & $\#~ \text{iterations}$ \\
\hline\hline
0 &     $12$ & 	    $0.6492810516$ &    $1.702 \cdot 10^{-2}$ & $1.114 \cdot 10^{-2}$ & $0.654$ & $4$ \\
\hline
6 &     $114$ &     $0.6579150471$ &    $1.133 \cdot 10^{-1}$ & $2.763 \cdot 10^{-3}$ & $0.024$ & \revision{$12$} \\
\hline
13 &     $930$	&   $0.6601261997$ &    $2.560 \cdot 10^{-2}$ & $5.520\cdot 10^{-4}$ & $0.022$ & \revision{$12$} \\
\hline
25 &    $11132$	&    $0.6606358189$ &   $2.460 \cdot 10^{-3}$ & $4.234\cdot 10^{-5}$ & $0.017$ & \revision{$12$} \\
\hline
34 &    $96216$ &   $0.6606746264$ &    $2.473 \cdot 10^{-4}$ & $3.528 \cdot 10^{-6}$ & $0.014$ & \revision{$12$} \\
\hline
39 &    $348152$ &   $0.6606774253$ &   $5.920 \cdot 10^{-5}$ & $7.288 \cdot 10^{-7}$ & $0.012$ &\\
\hline
\end{tabular}

\caption{Example~\ref{subsection:cube}: Numerical results of Algorithm~\ref{algorithm} for $\theta = 1/2$. }
\label{table:cube:tab}
\end{table}

\begin{figure}[h!]
\begin{tikzpicture}
\begin{loglogaxis}[
 width=1.0\textwidth, height=8.5cm,
legend style={at={(0.02,0.02)},
legend cell align={left},
anchor=south west, align=left, draw = none },
xlabel={number of elements},
ylabel={error or estimator},
]
	\addplot+[solid,mark=square,mark size=2pt,mark options={line width=1.0pt},color=blue] table
		[x=number_of_elements,y=capacity_error]{figures/cube_res_200k.csv};
	\addlegendentry{\err: residual estimator}	
	
	\addplot+[solid,mark=*,mark size=2pt,mark options={line width=1.0pt},color=blue] table
		[x=number_of_elements,y=estimator]{figures/cube_res_200k.csv};
	\addlegendentry{$\eta_\ell^2$: residual-estimator}
	
		\addplot+[solid,mark=square,mark size=2pt,mark options={line width=1.0pt},color=red] table
		[x=number_of_elements,y=capacity_error]{figures/cube_zz_200k.csv};
	\addlegendentry{\err: ZZ-estimator}	
	
	\addplot+[solid,mark=*,mark size=2pt,mark options={line width=1.0pt},color=red] table
		[x=number_of_elements,y=estimator]{figures/cube_zz_200k.csv};
	\addlegendentry{$\eta_\ell^2$: ZZ-estimator}


	\addplot [black,dashed ] expression [domain=24:400000
		, samples = 10] {40*x^(-1)} node [midway,above,yshift=0.25cm] {$\mathcal{O}(N^{-1})$};

\end{loglogaxis}
\end{tikzpicture}
\caption{Example~\ref{subsection:cube}: Convergence of the error estimator $\eta_\ell^2$ and capacity error $\err_\ell$. 
The estimator $\eta_\ell^2$ as well as the error converge with optimal order 
$\OO(N^{-1})$, if the a weighted-residual or the proposed ZZ-based error estimator is used.}
\label{fig:cube:convergence}  
\end{figure}

\begin{figure}[h!]
\begin{tikzpicture}
\begin{loglogaxis}[
 width=1.0\textwidth, height=8.5cm,
legend style={at={(0.02,0.02)},
legend cell align={left},
anchor=south west, align=left, draw = none },
xlabel={number of elements},
ylabel={error or estimator},
]

	\addplot+[solid,mark=square,mark size=2pt,mark options={line width=1.0pt},color=red] table
		[x=number_of_elements,y=capacity_error]{figures/cube_theta_02.csv};
	\addlegendentry{$\theta = 0.2$}	

	\addplot+[solid,mark=square,mark size=2pt,mark options={line width=1.0pt},color=orange] table
		[x=number_of_elements,y=capacity_error]{figures/cube_theta_04.csv};
	\addlegendentry{$\theta = 0.4$}		

	\addplot+[solid,mark=square,mark size=2pt,mark options={line width=1.0pt},color=blue] table
		[x=number_of_elements,y=capacity_error]{figures/cube_theta_06.csv};
	\addlegendentry{$\theta = 0.6$}	

	\addplot+[solid,mark=square,mark size=2pt,mark options={line width=1.0pt},color=green] table
		[x=number_of_elements,y=capacity_error]{figures/cube_theta_08.csv};
	\addlegendentry{ $\theta = 0.8$}	
	
    \addplot+[solid,mark=square,mark size=2pt,mark options={line width=1.0pt},color=magenta] table
		[x=number_of_elements,y=capacity_error]{figures/cube_theta_1.csv};
	\addlegendentry{uniform}	

	\addplot+[solid,mark=*,mark size=2pt,mark options={line width=1.0pt},color=red] table
		[x=number_of_elements,y=estimator]{figures/cube_theta_02.csv};
	
	\addplot+[solid,mark=*,mark size=2pt,mark options={line width=1.0pt},color=orange] table
		[x=number_of_elements,y=estimator]{figures/cube_theta_04.csv};

	\addplot+[solid,mark=*,mark size=2pt,mark options={line width=1.0pt},color=blue] table
		[x=number_of_elements,y=estimator]{figures/cube_theta_06.csv};
	
	\addplot+[solid,mark=*,mark size=2pt,mark options={line width=1.0pt},color=green] table
		[x=number_of_elements,y=estimator]{figures/cube_theta_08.csv};
	
	\addplot+[solid,mark=*,mark size=2pt,mark options={line width=1.0pt},color=magenta] table
		[x=number_of_elements,y=estimator]{figures/cube_theta_1.csv};

	\addplot [black,dashed ] expression [domain=24:100000
	, samples = 10] {1.5* 10^(-1)*x^(-2/3)} node [midway,above,yshift=0.10cm] {$\mathcal{O}(N^{-2/3})$};

	\addplot [black,dashed ] expression [domain=24:100000
		, samples = 10] {40*x^(-1)} node [midway,above,yshift=0.25cm] {$\mathcal{O}(N^{-1})$};

\end{loglogaxis}
\end{tikzpicture}
\caption{Example~\ref{subsection:cube}: Convergence of the error estimator $\eta_\ell^2$ (circles) and capacity error $\err_\ell$ (squares) for different value of $0 < \theta <1$ and uniform refinement $\theta = 1$.
Uniform refinement leads to a reduced rate of convergence. 
}
\label{fig:cube:theta}  
\end{figure}
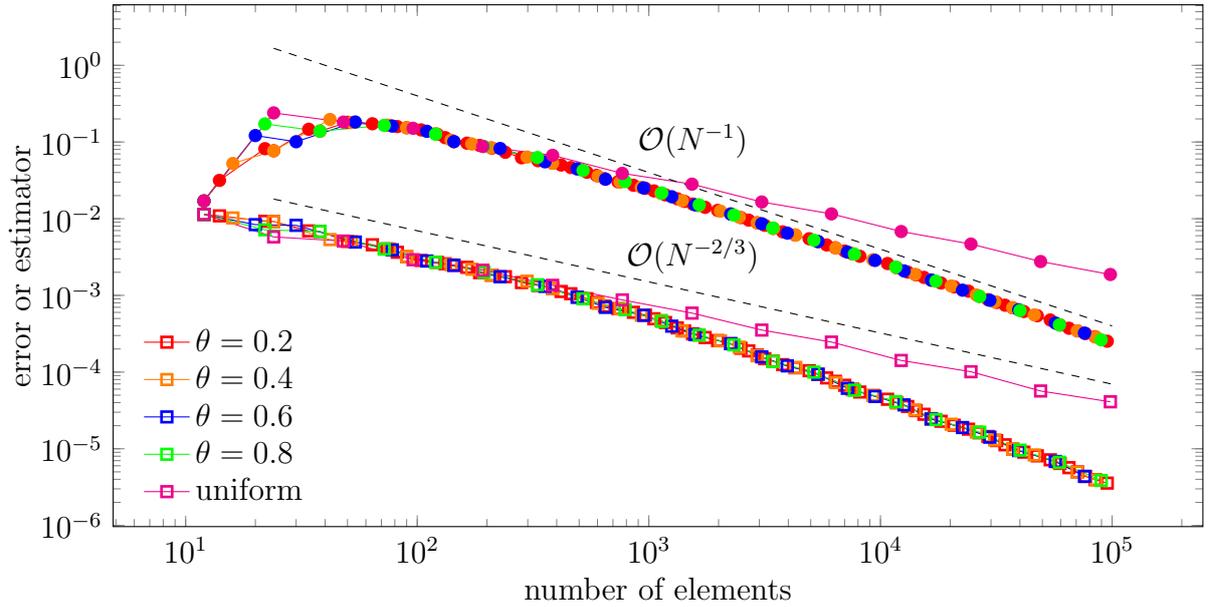

Figure~\ref{fig:cube:iterations} displays the condition number of the arising linear systems~\eqref{eq:discreteform}.
We emphasize that the MLAS preconditioning is 
optimal in the sense that the condition number of the arising systems remains bounded, independently of the number of elements; see~\cite{abem+solve}. 
This is confirmed for MLAS as well as empirically observed also for operator preconditioning with the hypersingular operator in Algorithm~\ref{algorithm}. On the other hand, the condition number of the non-preconditioned systems explodes with increasing number of elements resp.\ mesh graduation. 
Moreover, Figure~\ref{fig:cube:iterations} shows that diagonal preconditioning does lead to a slower growth of the condition number, but is not optimal. 
Figure~\ref{fig:cube:res10_15} and Figure~\ref{fig:cube:res20_30} show the distribution of the error estimator in some of the generated adaptive meshes. Due to generic edge singularities, the adaptive algorithm leads to refinement along edges and corners, while elements in the interior of the cube-facets stay relatively coarse. 

\begin{figure}[h!]
\begin{tikzpicture}
\begin{loglogaxis}[
 width=1.0\textwidth, height=8.5cm,
legend style={at={(0.02,1-0.02)},
legend cell align={left}, anchor=north west, align=left, draw = none },
xlabel={number of elements},
ylabel={condition number},
]
	\addplot+[solid,mark=*,mark size=2pt,mark options={line width=1.0pt},color=red] table
		[x=number_of_elements,y=cond_precon]{figures/cube_cond_zz.csv};
	\addlegendentry{ZZ - precon}
	
	\addplot+[solid,mark=*,mark size=2pt,mark options={line width=1.0pt},color=blue] table
		[x=number_of_elements,y=cond_diag]{figures/cube_cond_zz.csv};
	\addlegendentry{ZZ - diag}	
	
	\addplot+[solid,mark=*,mark size=2pt,mark options={line width=1.0pt},color=green] table
		[x=number_of_elements,y=cond_non]{figures/cube_cond_zz.csv};
	\addlegendentry{ZZ - non}	
	
	\addplot+[solid,mark=square,mark size=2pt,mark options={line width=1.0pt},color=red] table
		[x=number_of_elements,y=cond_precon]{figures/cube_cond_res.csv};
	\addlegendentry{Res - precon}
	
	\addplot+[solid,mark=square,mark size=2pt,mark options={line width=1.0pt},color=blue] table
		[x=number_of_elements,y=cond_diag]{figures/cube_cond_res.csv};
	\addlegendentry{Res - diag}	
	
	\addplot+[solid,mark=square,mark size=2pt,mark options={line width=1.0pt},color=green] table
		[x=number_of_elements,y=cond_non]{figures/cube_cond_res.csv};
	\addlegendentry{Res - non}


\end{loglogaxis}
\end{tikzpicture}
\caption{Example~\ref{subsection:cube}: Condition number for different preconditioning strategies for the arising linear systems in Algorithm~\ref{algorithm} (ZZ) with operator preconditioning with the hypersingular operators vs. Algorithm (Res) with multilevel additive Schwarz preconditioning. Additionally, diagonal preconditioning is employed in both cases.  }
\label{fig:cube:iterations}  
\end{figure}

\begin{figure}
	\centering
	\includegraphics[width=0.48\textwidth]{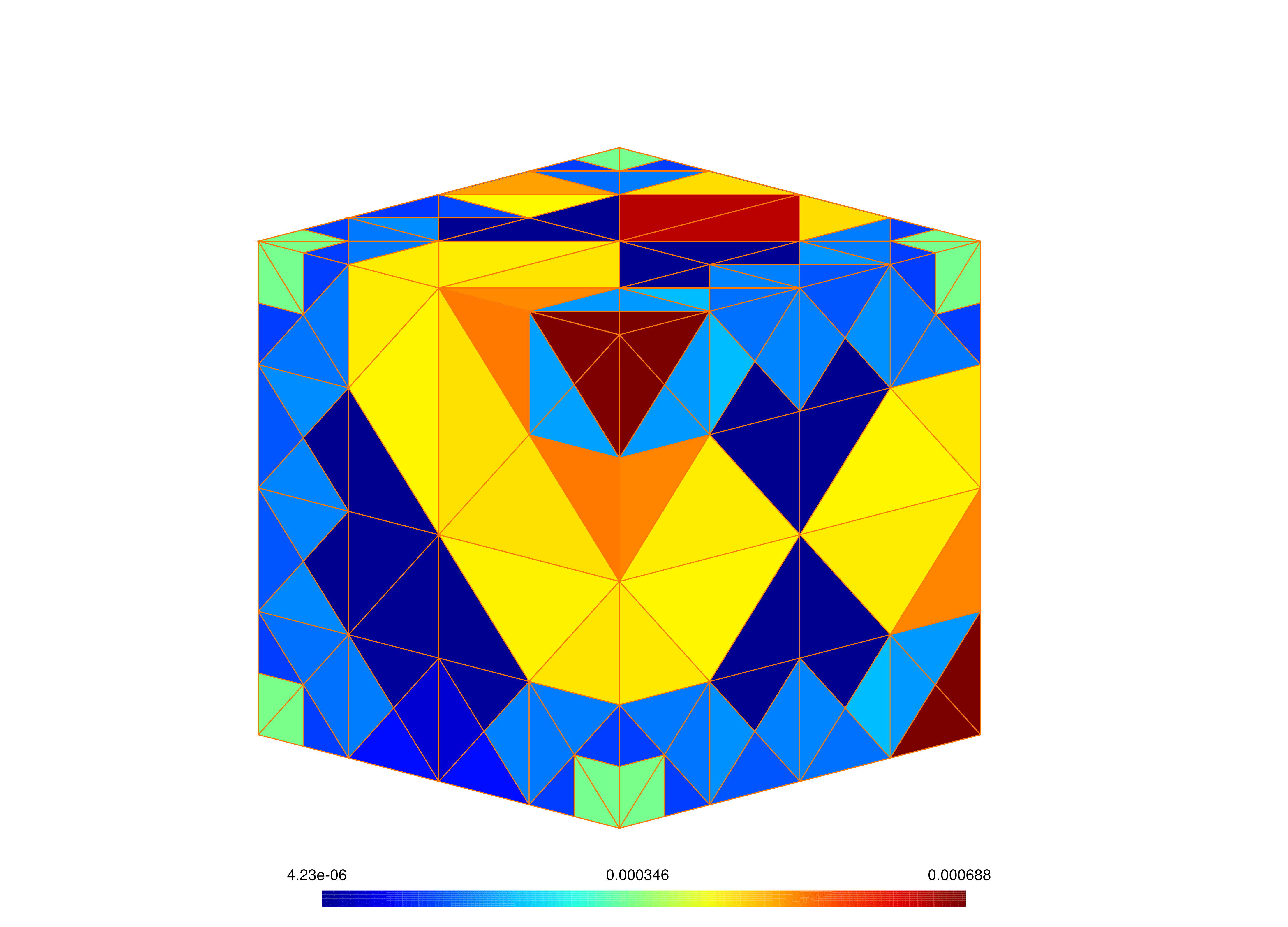}
	\includegraphics[width=0.48\textwidth]{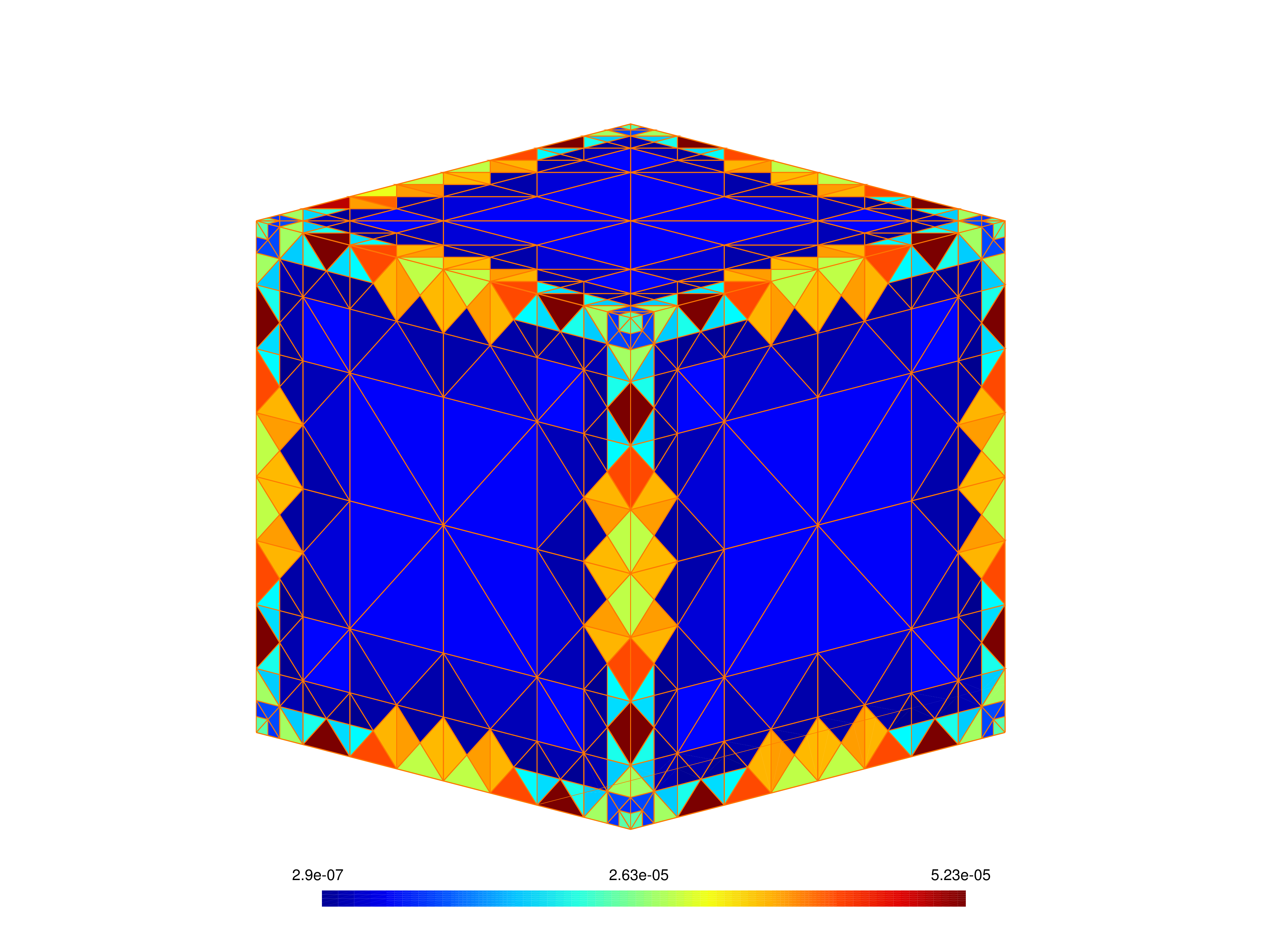}
	\caption{Example~\ref{subsection:cube}: Mesh $\TT_\ell$ and distribution of the error estimator $\eta_\ell^2$ for $\ell = 10$ with $306$ elements (left) and for $\ell = 15$  with $1244$ elements (right).}
	\label{fig:cube:res10_15}	  
\end{figure}

\begin{figure}
	\centering
	\includegraphics[width=0.48\textwidth]{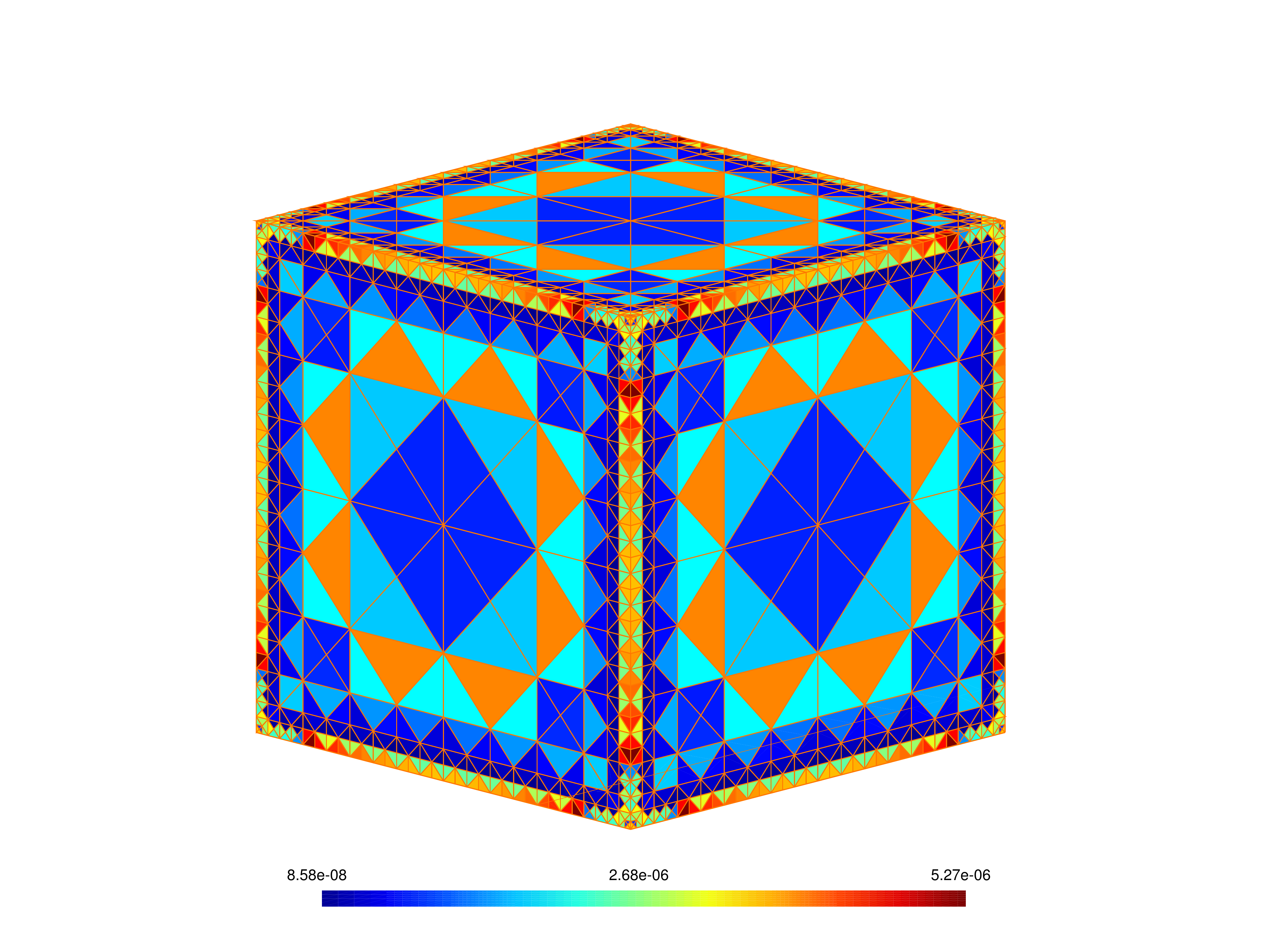}
	\includegraphics[width=0.48\textwidth]{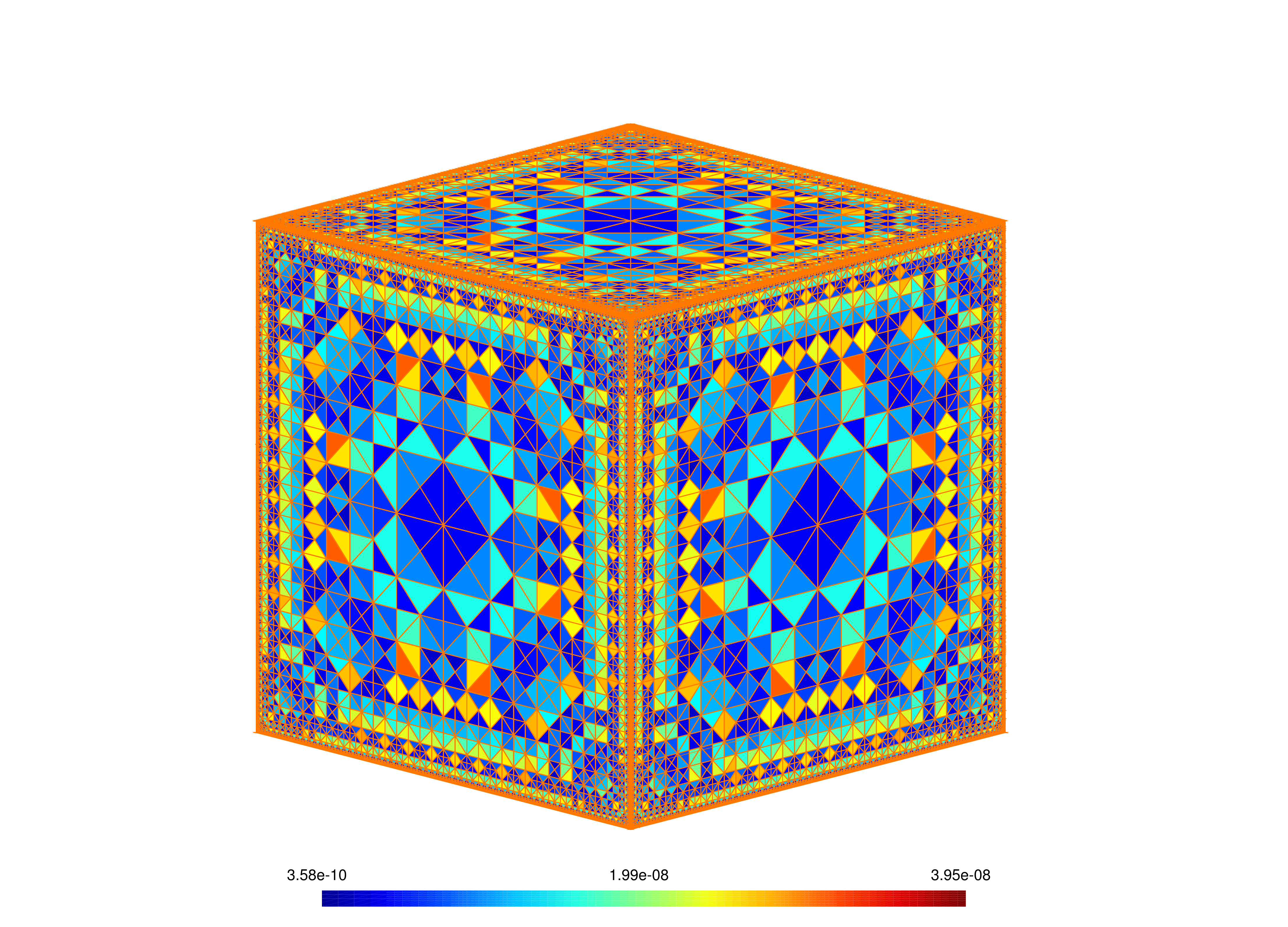}
	\caption{Example~\ref{subsection:cube}: Mesh $\TT_\ell$ and distribution of the error estimator $\eta_\ell^2$ for $\ell =20$ with $3792$ elements (left) and for $\ell =30$ with $46044$ elements (right).}
	\label{fig:cube:res20_30}	  
\end{figure}


\subsection{Example 2: Fichera cube}
\label{subsection:fichera}

As second example, we consider the Fichera cube $[-1,1]^3 \setminus [0,1]^3$  with side length $2$; see Figure~\ref{fig:fichera:inital} (left). 
In contrast to the unit cube, we lack a highly accurate benchmark computation and approximate the capacity error by
\begin{align*}
    \err_\ell := |\capacity - \capacity_\ell| \approx |\capacity_{\ell_{\rm max}} - \capacity_\ell| \quad \text{for all } 0 \leq \ell < \ell_{\rm max}.
\end{align*}
Convergence behaviour of the estimator $\eta_\ell^2$ and of the error $\err_\ell$ are displayed in 
Figure~\ref{fig:fichera:convergence}. Both, Algorithm~\ref{algorithm} as well as Algorithm~\ref{algorithm:res}, lead to the optimal rate of convergence $\OO(N^{-1})$. 
Table~\ref{table:fichera:tab} shows some numerical results produced by Algorithm~\ref{algorithm}.
For the fichera cube approximately $10^4$ elements are sufficient
to achieve an error of order $10^{-4}$.
This can be computed easily on a workstation within a couple of minutes. 
Figure~\ref{fig:fichera:inital} and Figure~\ref{fig:fichera:res} show the distribution of the error estimator in some of the generated adaptive meshes. Due to generic edge singularities, the adaptive algorithm leads to refinement along edges and corners, while elements in the interior of the cube faces stay relatively coarse.

\begin{table}[h!]
\begin{tabular}{|c|c|c|c|c|c|}
\hline
step $\ell$ & number of elements $\# \TT_\ell$ & $\capacity_\ell$ & $\eta_\ell^2$ &  $|\capacity_{\ell_{\rm max}} - \capacity_\ell|$ \\
\hline\hline
0 & $48$ & 	$1.2813985240$ & $3.239 \cdot 10^{-1}$	&	$9.858 \cdot 10^{-3}$ \\
\hline
4 & $126$ & $1.2853002501$ & $2.776 \cdot 10^{-1}$	& 	$5.957 \cdot 10^{-3}$ \\
\hline
9 & $986$ & $1.2900557132$ & $5.470 \cdot 10^{-2}$		&	$1.201 \cdot 10^{-3}$ \\
\hline
19 & $11992$ & $1.2911689924$ & $5.270\cdot 10^{-3}$ & 	$8.776\cdot 10^{-5}$ \\
\hline
28 & $113484$ & $1.2912534661$ & $4.829 \cdot 10^{-4}$ &		$3.282\cdot 10^{-6}$ \\
\hline
30 & $183762$ & $1.2912567475$ &  $2.868 \cdot 10^{-4}$ &\\
\hline
\end{tabular}
\caption{Example~\ref{subsection:fichera}: Numerical results of Algorithm~\ref{algorithm} for $\theta = 1/2$. }
\label{table:fichera:tab}
\end{table}

\begin{figure}[h!]
\begin{tikzpicture}
\begin{loglogaxis}[
 width=1.0\textwidth, height=8.5cm,
legend style={at={(0.02,0.02)},
legend cell align={left}, anchor=south west, align=left, draw = none },
xlabel={number of elements},
ylabel={error or estimator},
]
	\addplot+[solid,mark=square,mark size=2pt,mark options={line width=1.0pt},color=blue] table
		[x=number_of_elements,y=capacity_error]{figures/fichera_res_200k_cap_error.csv};
	\addlegendentry{\err: residual estimator}	
	
	\addplot+[solid,mark=*,mark size=2pt,mark options={line width=1.0pt},color=blue] table
		[x=number_of_elements,y=estimator]{figures/fichera_res_200k.csv};
	\addlegendentry{$\eta_\ell^2$: residual-estimator}
	
		\addplot+[solid,mark=square,mark size=2pt,mark options={line width=1.0pt},color=red] table
		[x=number_of_elements,y=capacity_error]{figures/fichera_zz_200k_cap_error.csv};
	\addlegendentry{\err: ZZ-estimator}	
	
	\addplot+[solid,mark=*,mark size=2pt,mark options={line width=1.0pt},color=red] table
		[x=number_of_elements,y=estimator]{figures/fichera_zz_200k.csv};
	\addlegendentry{$\eta_\ell^2$: ZZ-estimator}


	\addplot [black,dashed ] expression [domain=48:200000
		, samples = 10] {100*x^(-1)} node [midway,above,yshift=0.25cm] {$\mathcal{O}(N^{-1})$};

\end{loglogaxis}
\end{tikzpicture}
\caption{Example~\ref{subsection:fichera}: Convergence of the error estimator $\eta_\ell^2$ and capacity error $\err_\ell$. 
	The estimator $\eta_\ell^2$ as well as the error converge with optimal order 
	$\OO(N^{-1})$, if the weighted-residual or the proposed ZZ-based error estimator is used.}
\label{fig:fichera:convergence}  
\end{figure}

\begin{figure}
	\centering
	\includegraphics[width=0.48\textwidth]{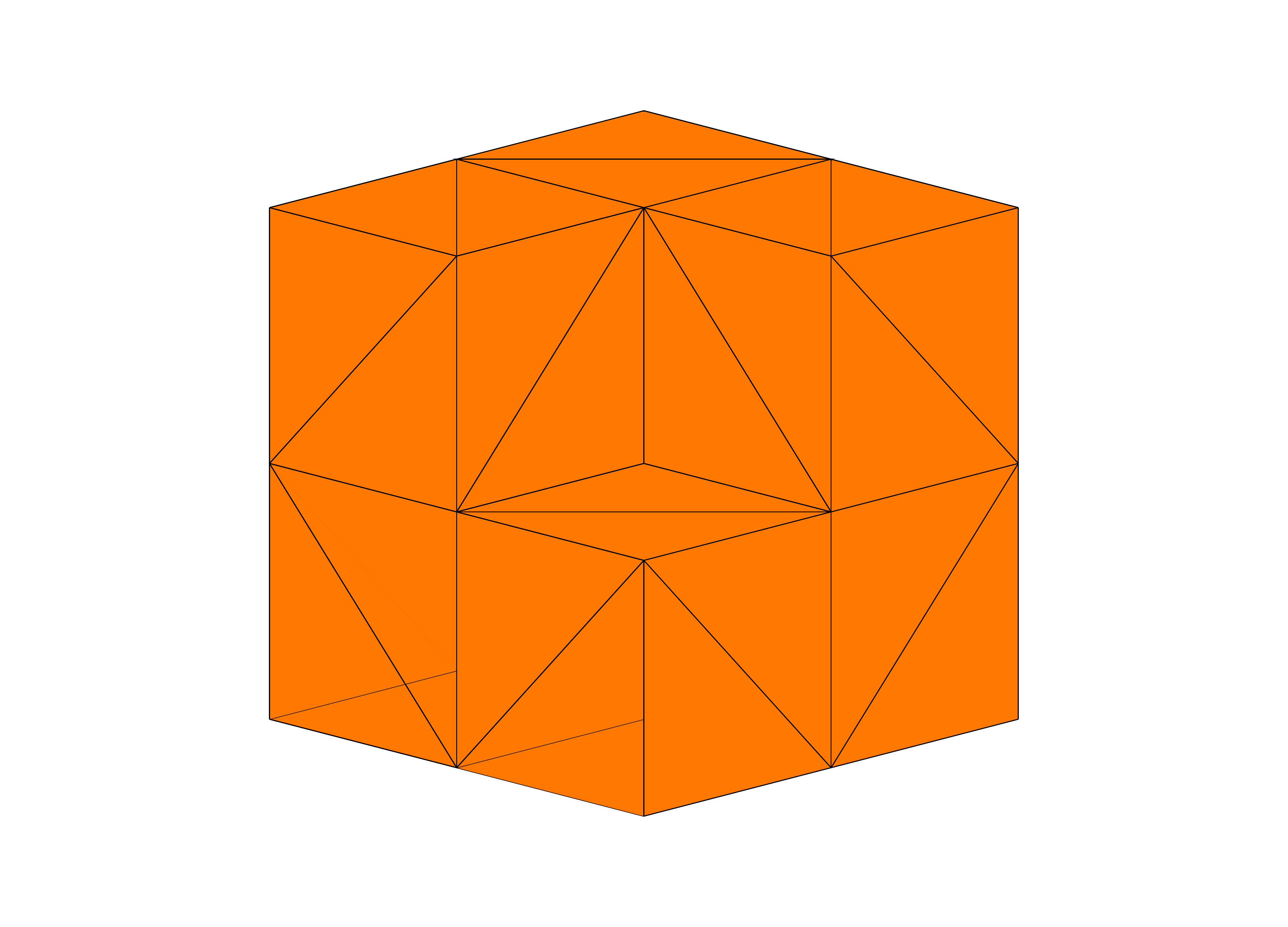}
	\includegraphics[width=0.48\textwidth]{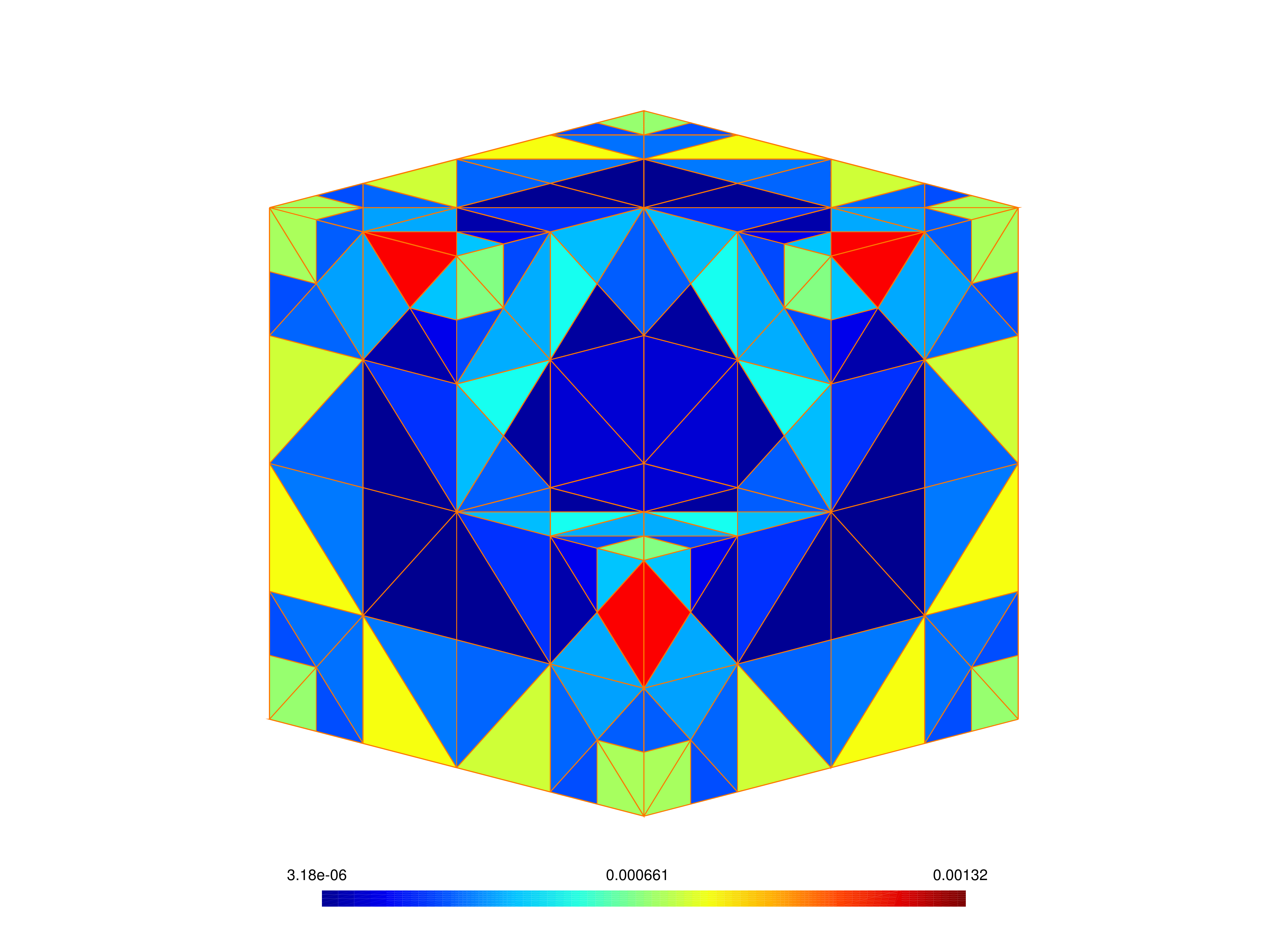}
    \caption{Example~\ref{subsection:fichera}: Initial triangulation $\TT_0$ (left) consisting of $48$ triangles.
    	Mesh $\TT_\ell$ and distribution of the error estimator $\eta_\ell^2$ for $\ell=5$ with $410$ elements (right).}
	\label{fig:fichera:inital}		  
\end{figure}

\begin{figure}
	\centering
	\includegraphics[width=0.48\textwidth]{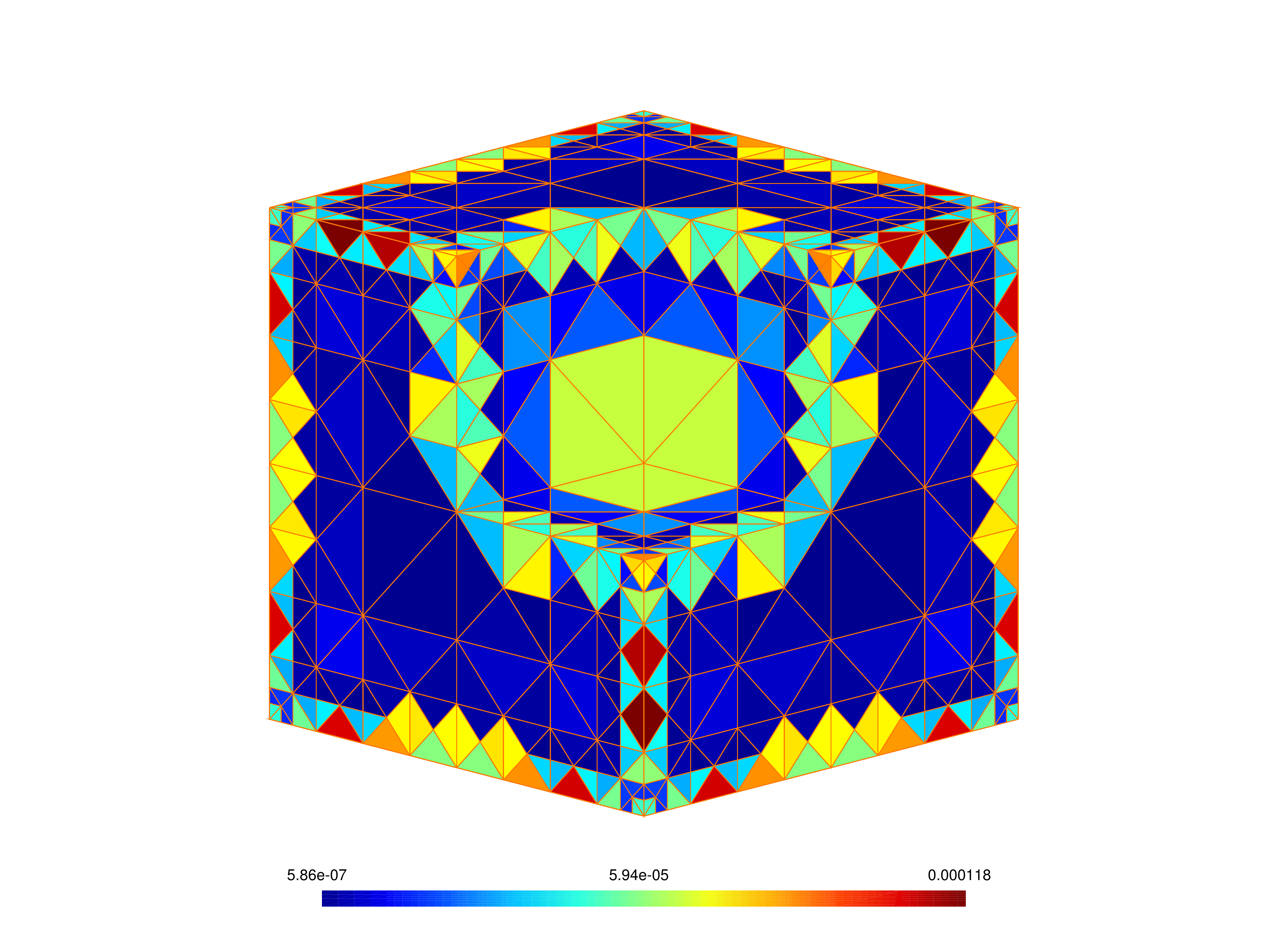}
	\includegraphics[width=0.48\textwidth]{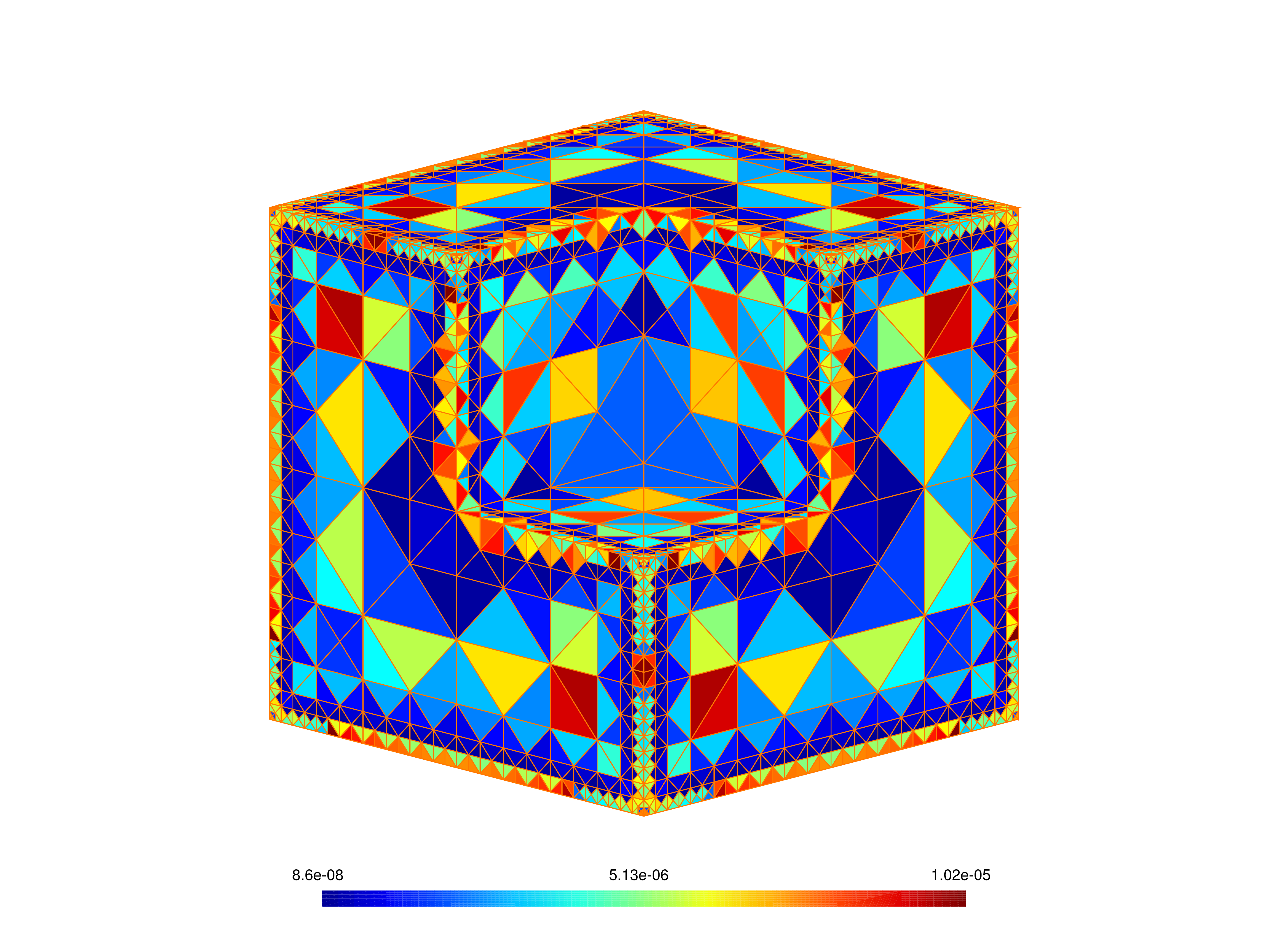}
    \caption{Example~\ref{subsection:fichera}: Mesh $\TT_\ell$ and distribution of the error estimator $\eta_\ell^2$ for $\ell=10$ (left) with $1308$ elements and $\ell = 15$ (right) with $4226$ elements.}	
	\label{fig:fichera:res}  
\end{figure}

\newpage

\subsection{Example 3: Star-shaped geometry}
\label{subsection:star}

As third example, we consider a star-shaped geometry shown in Figure~\ref{fig:star:inital}.
We use Algorithm~\ref{algorithm} as black-box method. 
Figure~\ref{fig:star:convergence} shows the convergence of the estimator $\eta_\ell^2$ and the capacity error 
$\err_\ell := |\capacity_{\ell_{\rm max}}-\capacity_\ell| \approx |\capacity-\capacity_\ell|$. 
We observe the optimal convergence order $\OO(N^{-1})$ for the estimator as well as the error for Algorithm~\ref{algorithm} and Algorithm~\ref{algorithm:res}.
The rate of convergence is robust with respect to the D\"orfler marking parameter $0 < \theta \leq 1$.  Besides
uniform refinement ($\theta = 1$), all tested parameters $\theta \in \{0.2,0.4,0.6,0.8\}$
lead to the same rate of $\OO(N^{-1})$; see Figure~\ref{fig:star:theta}.

Figure~\ref{fig:star:iterations} shows the condition number of the arising linear systems~\eqref{eq:discreteform}.
Operator preconditioning with the hypersingular operator in Algorithm~\ref{algorithm} as well as the multilevel additive Schwarz preconditioner guarantee a bounded condition number. 
On the other hand, diagonal- and non-preconditioning lead to an increasing condition number as the number of elements grows.  
Similarly to previous examples, the Algorithm has to resolve generic edge singularities.  
Figure~\ref{fig:star:res5_10} and Figure~\ref{fig:star:res20_25} show the 
distribution of $\eta_\ell^2$ on some meshes generated by Algorithm~\ref{algorithm}.

Table~\ref{table:star:tab} shows some numerical results for the star-example. 
Algorithm~\ref{algorithm} takes 22 steps and approximately $10^4$ elements 
to reach an error of order $10^{-4}$.

\begin{figure}
	\centering
	\includegraphics[width=0.48\textwidth]{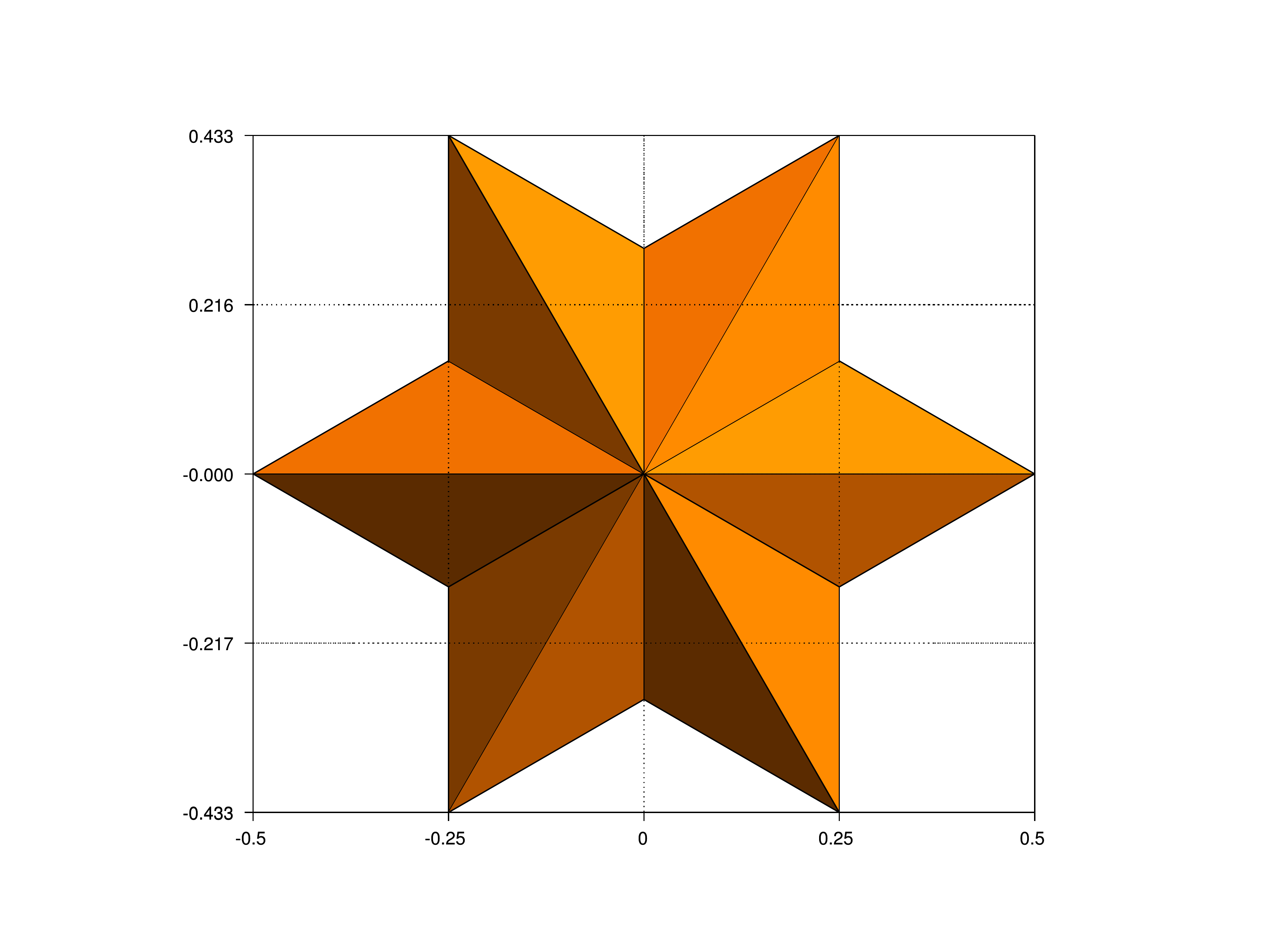}
	\includegraphics[width=0.48\textwidth]{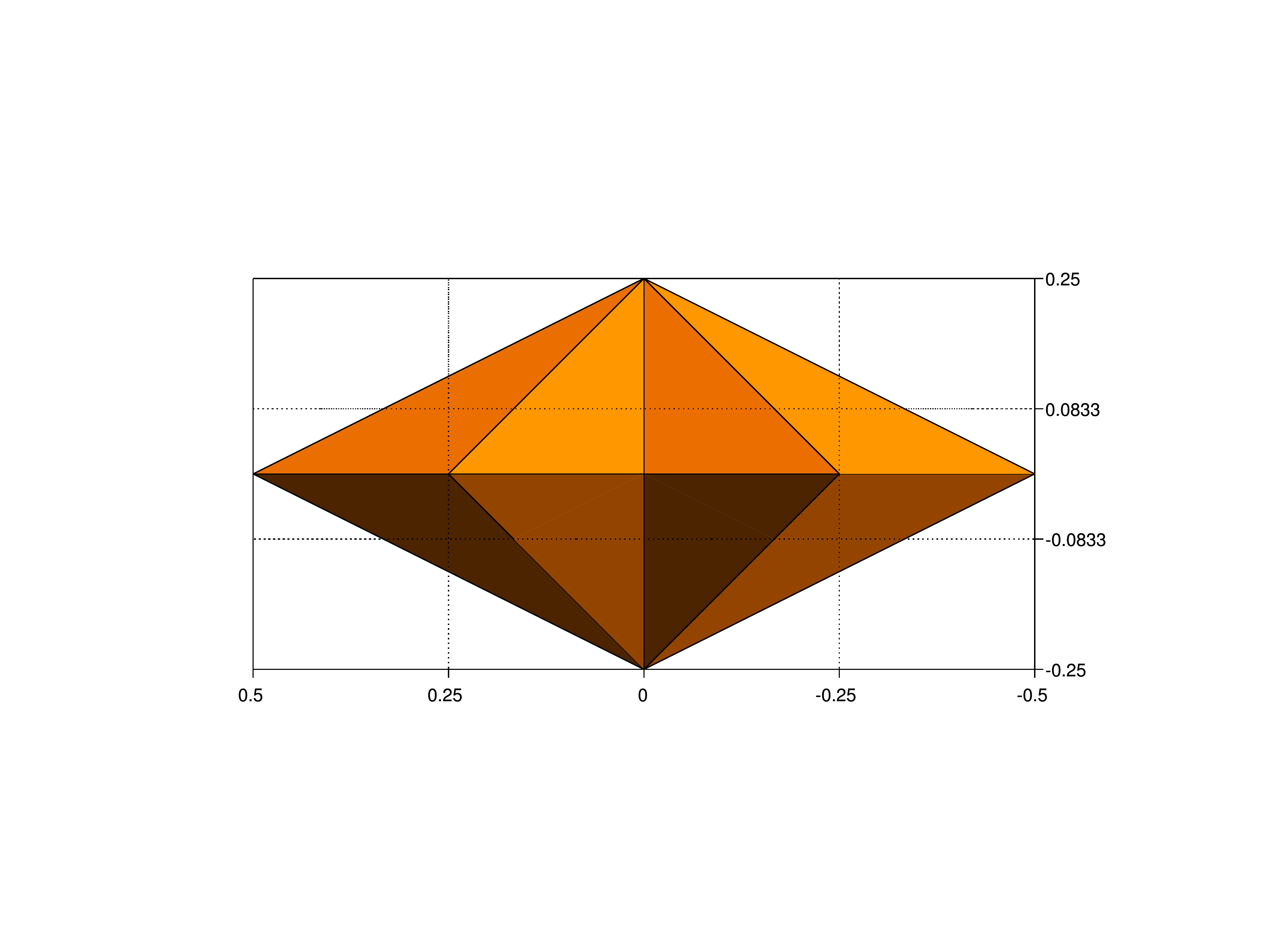}
	\caption{Example~\ref{subsection:star}: Geometry and initial mesh $\TT_0$ with $24$ elements of the star example. 
    The point of view is along the $z$-axis (left) and along the $y$-axis (right).}
	\label{fig:star:inital}	  
\end{figure}

\begin{table}[h!]
\begin{tabular}{|c|c|c|c|c|c|}
\hline
step $\ell$ & number of elements $\# \TT_\ell$ & $\capacity_\ell$ & $\eta_\ell^2$ &  $|\capacity_{\ell_{\rm max}} - \capacity_\ell|$ &
\revision{$\#~ \text{iterations}$} \\
\hline\hline
0 & $24$ & 	$0.2942967843$ & $2.580 \cdot 10^{-1}$	&	$5.456 \cdot 10^{-3}$ & \revision{$5$}\\
\hline
5 & $100$ & 	$0.2978622929$ & $7.092 \cdot 10^{-2}$	& 	$5.456 \cdot 10^{-3}$ & \revision{$5$}\\
\hline
14 & $1136$	& $0.2995412185$ & $1.384 \cdot 10^{-2}$		&	$1.686\cdot 10^{-3}$ & $\revision{20}$ \\
\hline
22 & $9752$	& $0.2998567063$ & $2.402\cdot 10^{-3}$ & 	$3.223\cdot 10^{-4}$ &$\revision{20}$ \\
\hline
31 & $91046$ & $0.2998234547$ & $2.540 \cdot 10^{-4}$ &		$2.869 \cdot 10^{-5}$ &$\revision{22}$\\
\hline
34 & $194838$ & $0.2998587081$ &  $1.110 \cdot 10^{-4}$  &&\\
\hline
\end{tabular}
\caption{Example~\ref{subsection:star}: Numerical results of Algorithm~\ref{algorithm} with $\theta = 1/2$.} 

\label{table:star:tab}
\end{table}

\begin{figure}[h!]
\begin{tikzpicture}
\begin{loglogaxis}[
 width=1.0\textwidth, height=8.5cm,
legend style={at={(0.02,0.02)},
legend cell align={left}, anchor=south west, align=left, draw = none },
xlabel={number of elements},
ylabel={error or estimator},
]
	\addplot+[solid,mark=square,mark size=2pt,mark options={line width=1.0pt},color=blue] table
		[x=number_of_elements,y=capacity_error]{figures/star_res_200k_cap_error.csv};
	\addlegendentry{\err: residual estimator}	
	
	\addplot+[solid,mark=*,mark size=2pt,mark options={line width=1.0pt},color=blue] table
		[x=number_of_elements,y=estimator]{figures/star_res_200k.csv};
	\addlegendentry{$\eta_\ell^2$: residual-estimator}
	
		\addplot+[solid,mark=square,mark size=2pt,mark options={line width=1.0pt},color=red] table
		[x=number_of_elements,y=capacity_error]{figures/star_zz_200k_cap_error.csv};
	\addlegendentry{\err: ZZ-estimator}	
	
	\addplot+[solid,mark=*,mark size=2pt,mark options={line width=1.0pt},color=red] table
		[x=number_of_elements,y=estimator]{figures/star_zz_200k.csv};
	\addlegendentry{$\eta_\ell^2$: ZZ-estimator}


	\addplot [black,dashed ] expression [domain=24:200000
		, samples = 10] {4*10*x^(-1)} node [midway,above,yshift=0.25cm] {$\mathcal{O}(N^{-1})$};

\end{loglogaxis}
\end{tikzpicture}
\caption{Example~\ref{subsection:star}: Convergence of the error estimator $\eta_\ell^2$ and capacity error $\err_\ell$. 
The estimator $\eta_\ell^2$ as well as the error converge with optimal order 
$\OO(N^{-1})$, if a weighted-residual or the proposed ZZ-based error estimator is used.}
\label{fig:star:convergence}  
\end{figure}

\begin{figure}[h!]
\begin{tikzpicture}
\begin{loglogaxis}[
 width=1.0\textwidth, height=8.5cm,
legend style={at={(0.02,0.02)},
legend cell align={left},
anchor=south west, align=left, draw = none },
xlabel={number of elements},
ylabel={error or estimator},
]

	\addplot+[mark=square,mark size=2pt,mark options={line width=1.0pt},color=red] table
		[x=number_of_elements,y=estimator]{figures/star_theta_02.csv};
	\addlegendentry{$\theta = 0.2$}
	
	\addplot+[mark=*,mark size=2pt,mark options={line width=1.0pt},color=orange] table
		[x=number_of_elements,y=estimator]{figures/star_theta_02.csv};
	\addlegendentry{$\theta = 0.4$}

	\addplot+[mark=triangle,mark size=2pt,mark options={line width=1.0pt},color=blue] table
		[x=number_of_elements,y=estimator]{figures/star_theta_02.csv};
	\addlegendentry{$\theta = 0.6$}
	
	\addplot+[mark=diamond,mark size=2pt,mark options={line width=1.0pt},color=green] table
		[x=number_of_elements,y=estimator]{figures/star_theta_02.csv};
	\addlegendentry{$\theta = 0.8$}
	
	\addplot+[solid,mark=pentagon,mark size=2pt,mark options={line width=1.0pt},color=magenta] table
		[x=number_of_elements,y=estimator]{figures/star_theta_10.csv};
	\addlegendentry{uniform}

	\addplot [black,dashed ] expression [domain=24:100000
	, samples = 10] {50*10^(-1)*x^(-2/3)} node [midway,above,yshift=0.20cm] {$\mathcal{O}(N^{-2/3})$};

	\addplot [black,dashed ] expression [domain=24:100000
		, samples = 10] {10*x^(-1)} node [midway,below,yshift=-0.75cm] {$\mathcal{O}(N^{-1})$};

\end{loglogaxis}
\end{tikzpicture}
\caption{Example~\ref{subsection:star}:Convergence of the error estimator $\eta_\ell^2$ for different values of $0 < \theta <1$ and uniform refinement $\theta = 1$.
Uniform refinement leads to a reduced rate of convergence. }

\label{fig:star:theta}  
\end{figure}
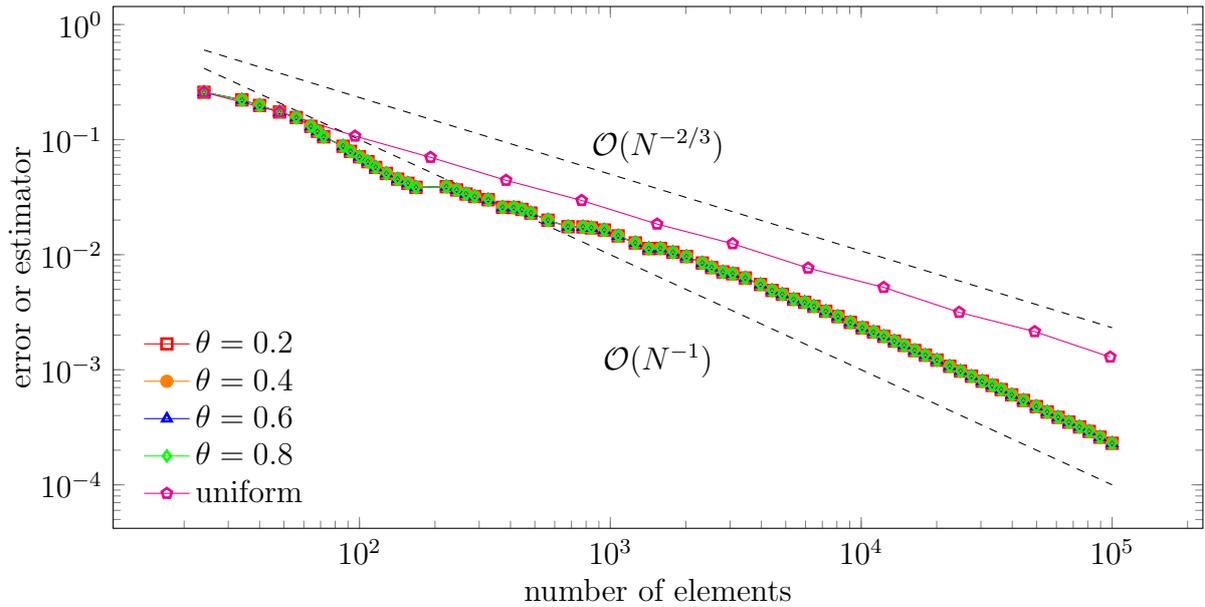

\begin{figure}[h!]
\begin{tikzpicture}
\begin{loglogaxis}[
 width=1.0\textwidth, height=8.5cm,
legend style={at={(0.02,1-0.02)},
legend cell align={left}, anchor=north west, align=left, draw = none },
xlabel={number of elements},
ylabel={condition number},
]
	\addplot+[solid,mark=*,mark size=2pt,mark options={line width=1.0pt},color=red] table
		[x=number_of_elements,y=cond_precon]{figures/star_cond_zz.csv};
	\addlegendentry{ZZ - precon}
	
	\addplot+[solid,mark=*,mark size=2pt,mark options={line width=1.0pt},color=blue] table
		[x=number_of_elements,y=cond_diag]{figures/star_cond_zz.csv};
	\addlegendentry{ZZ - diag}	
	
	\addplot+[solid,mark=*,mark size=2pt,mark options={line width=1.0pt},color=green] table
		[x=number_of_elements,y=cond_non]{figures/star_cond_zz.csv};
	\addlegendentry{ZZ - non}	
	
	\addplot+[solid,mark=square,mark size=2pt,mark options={line width=1.0pt},color=red] table
		[x=number_of_elements,y=cond_precon]{figures/star_cond_res.csv};
	\addlegendentry{Res - precon}
	
	\addplot+[solid,mark=square,mark size=2pt,mark options={line width=1.0pt},color=blue] table
		[x=number_of_elements,y=cond_diag]{figures/star_cond_res.csv};
	\addlegendentry{Res - diag}	
	
	\addplot+[solid,mark=square,mark size=2pt,mark options={line width=1.0pt},color=green] table
		[x=number_of_elements,y=cond_non]{figures/star_cond_res.csv};
	\addlegendentry{Res - non}


\end{loglogaxis}
\end{tikzpicture}
\caption{Example~\ref{subsection:star}: Condition number for different preconditioning strategies for the arising linear systems in Algorithm~\ref{algorithm} (ZZ) with operator preconditioning with the hypersingular operators vs. Algorithm (Res) with multilevel additive Schwarz preconditioning. Additionally, diagonal preconditioning is employed in both cases.  }	
\label{fig:star:iterations}  
\end{figure}

\begin{figure}
	\centering
	\includegraphics[width=0.48\textwidth]{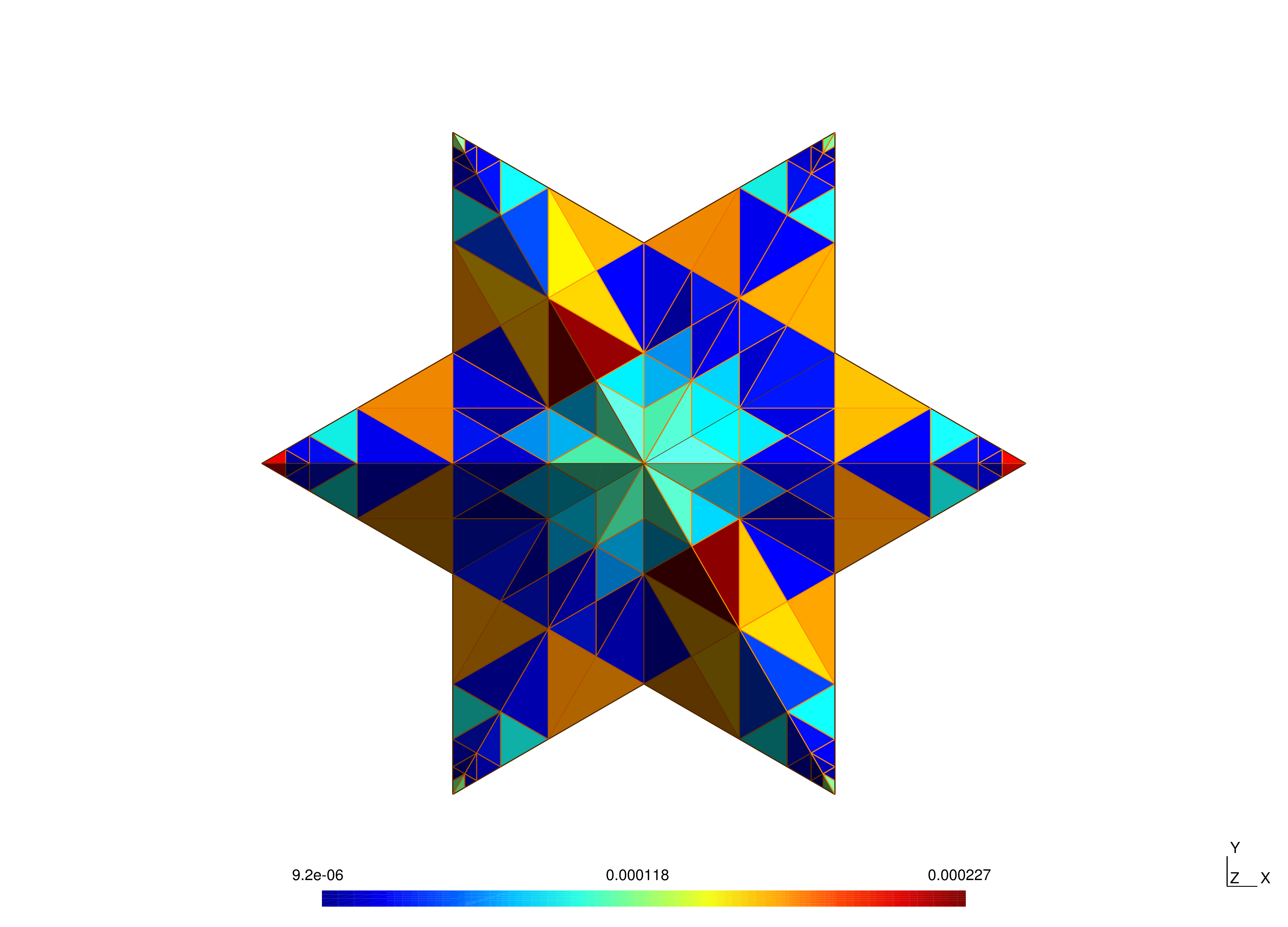}
	\includegraphics[width=0.48\textwidth]{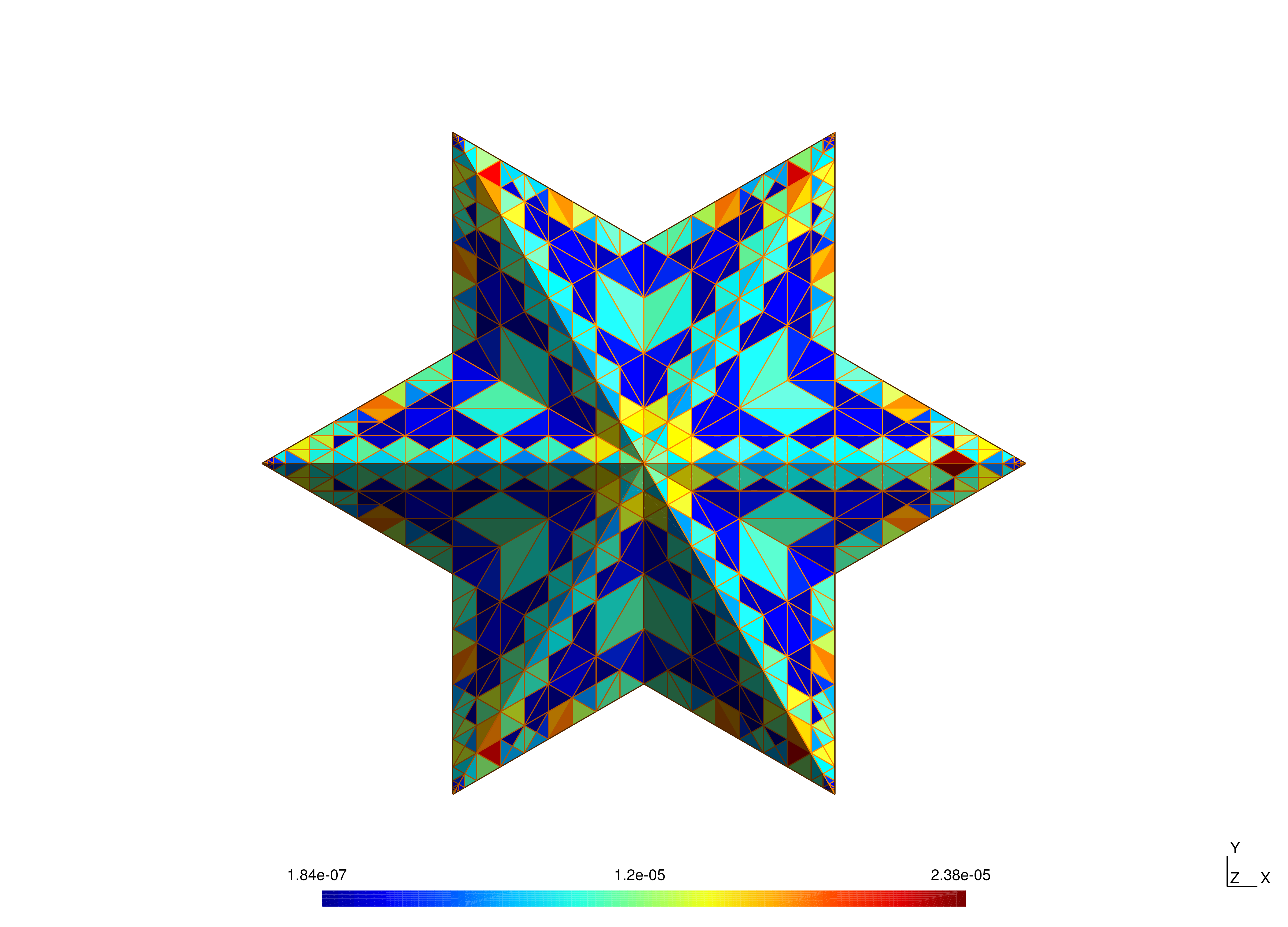}
	\caption{Example~\ref{subsection:star}: Mesh $\TT_\ell$ and distribution of the error estimator $\eta_\ell^2$ for $\ell=10$ (left) with $360$ elements and $\ell = 15$ (right) with $1524$ elements.}
	\label{fig:star:res5_10}	  
\end{figure}

\begin{figure}
	\centering
	\includegraphics[width=0.48\textwidth]{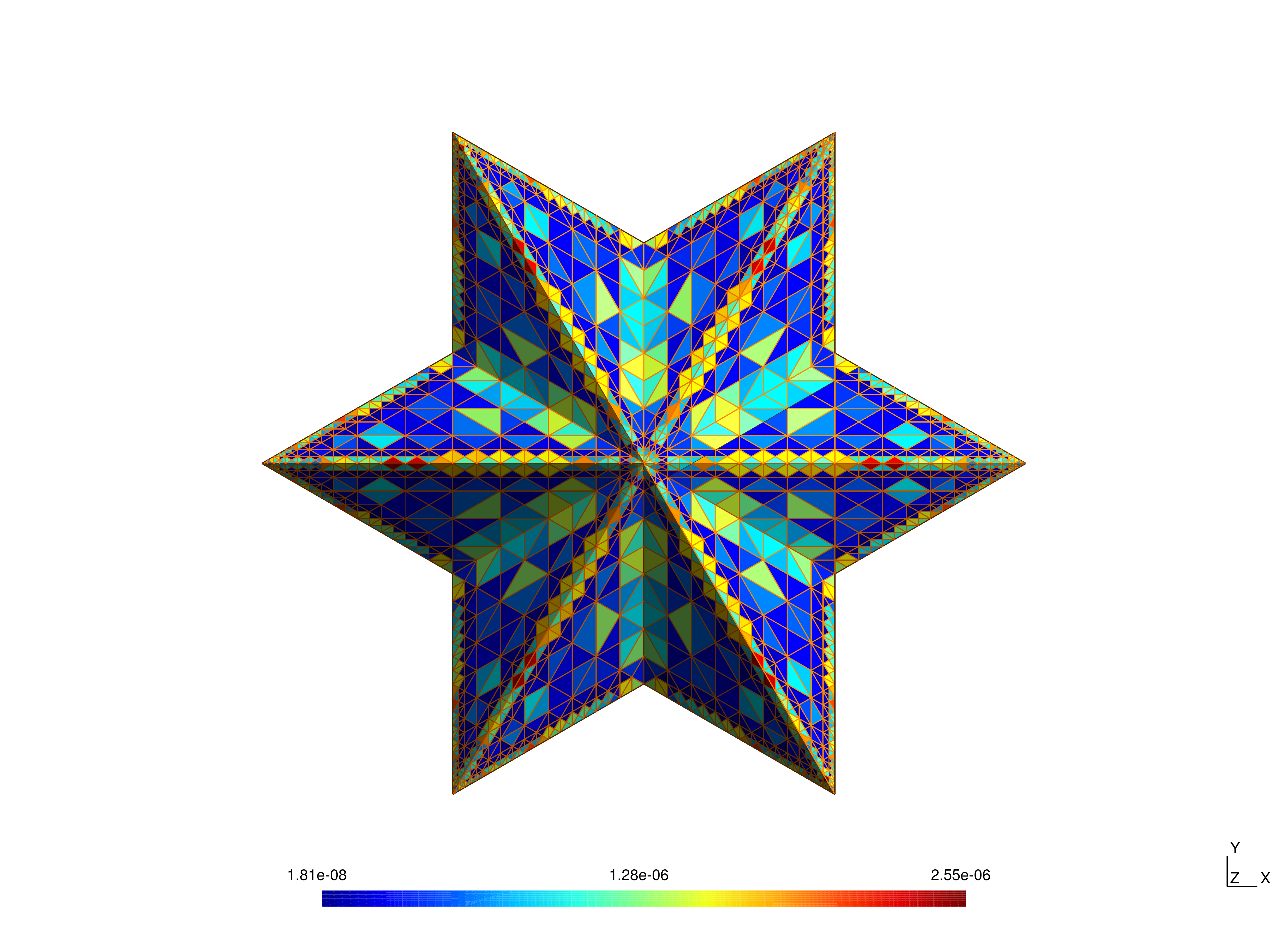}
	\includegraphics[width=0.48\textwidth]{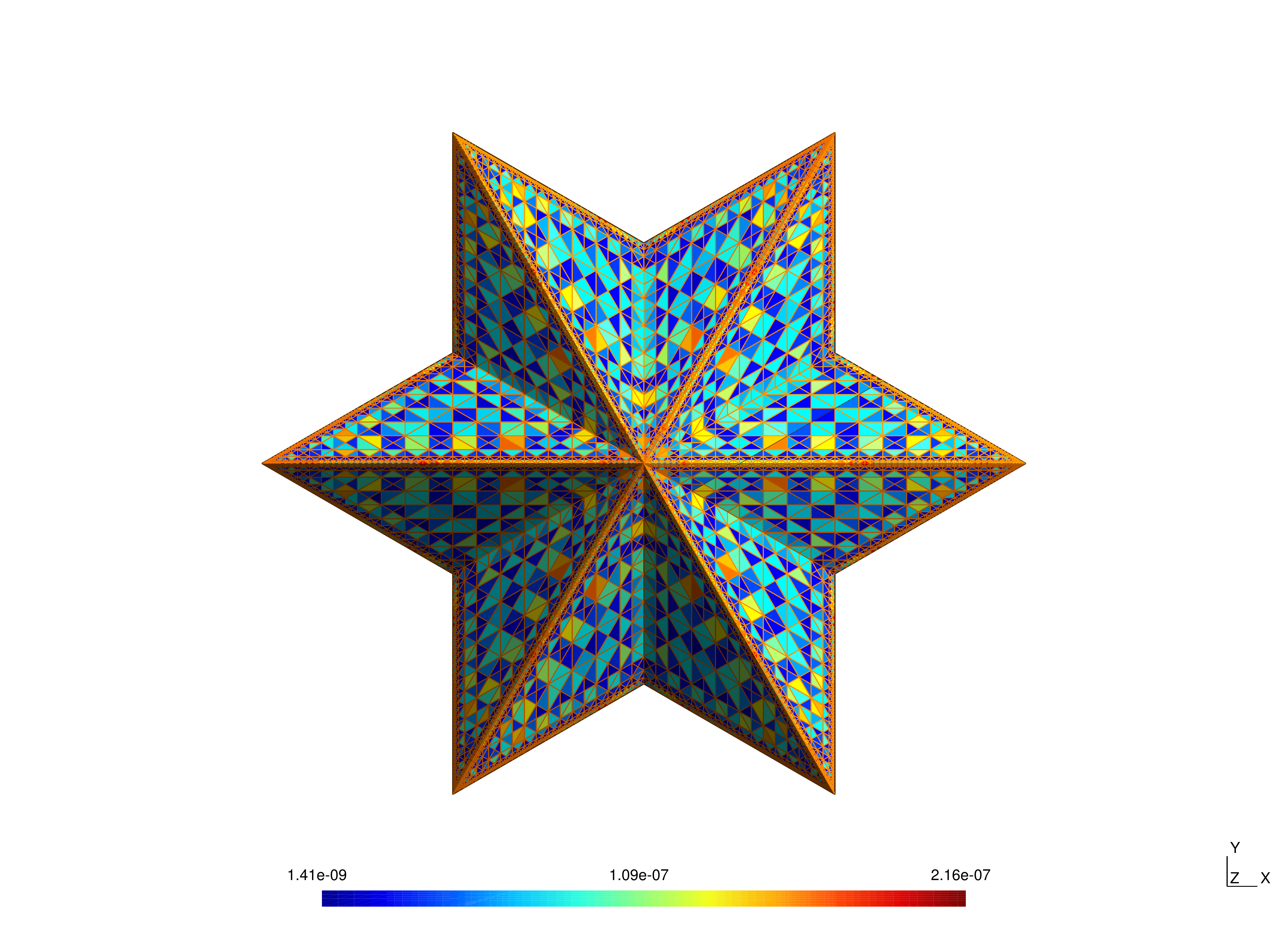}
	\caption{Example~\ref{subsection:star}: Mesh $\TT_\ell$ and distribution of the error estimator $\eta_\ell^2$ for $\ell=20$ (left) with $5754$ elements and $\ell = 25$ (right) with $20304$ elements.}
	\label{fig:star:res20_25}	  
\end{figure}



\appendix

\section{Weighted-residual error estimator}
\label{section:app:residual}

\noindent%
In this section, we briefly sketch the state of the art for adaptive BEM driven by the weighted-residual error estimator.
We consider the model problem~\eqref{eq:strongform} with right-hand side $f \in H^1(\Gamma)$. 
Note that the additional regularity of $f$ (instead of only $f \in H^{1/2}(\Gamma)$) is required for the well-posedness of the weighted-residual error estimator introduced below.
Here, $\TT_\ell$ is a conforming triangulation of $\Gamma$ into plane (closed) surface triangles. The Lax--Milgram lemma guarantees existence and uniqueness of $\Phi_\ell \in \PP^0(\TT_\ell)$ such that
\begin{align}\label{eq:discreteform:residual}
 \dual{V\Phi_\ell}{\Psi_\ell}_\Gamma = \dual{f}{\Psi_\ell}_\Gamma
 \quad \text{for all } \Psi_\ell \in \PP^0(\TT_\ell).
\end{align}
Unlike above, we note that the discrete solution is now computed on the primal triangulation instead of the dual triangulation. With $\nabla$ being the surface gradient, the local contributions of the weighted-residual error estimator read
\begin{align}\label{eq:residual:indicator}
    \mu_\ell(T) := \diam(T)^{1/2} \, \norm{\nabla (f-V \Phi_\ell)}{L^2(T)} \quad  \text{ for all } T \in \TT_\ell,
\end{align}
We note that reliability~\cite{cms01} and efficiency~\cite{invest} hold in the sense of
\begin{align}\label{eq:reliability:residual}
 (\Crel')^{-1} \, \norm{\phi - \Phi_\ell}{H^{-1/2}(\Gamma)}
 \le \mu_\ell := \Big( \sum_{T \in \TT_\ell} \mu_\ell(T)^2 \Big)^{1/2}
 \le \Ceff' \, \norm{h_\ell^{1/2}(\phi-\Phi_\ell)}{L^2(\Gamma)},
\end{align}
where the constants $\Crel', \Ceff' > 0$ depend only on $\Gamma$ and $\gamma$-shape regularity of $\TT_\ell$. As above, $h_\ell$ denotes the local mesh-size function $h_\ell|_T := \diam(T)$ for all $T \in \TT_\ell$. Based on $\mu_\ell$, the standard adaptive algorithm from~\cite{fkmp,gantumur,fkmp:part1} reads as follows:

\begin{algorithm}\label{algorithm:res}
\textbf{Input:} Initial conforming triangulation $\TT_0$ of $\Gamma$ into plane surface triangles, adaptivity parameters $0<\theta\le1$ and $\Cmark\ge1$.
\\
{\bfseries Adaptive loop.} Iterate the following steps~{\rm(i)}--{\rm(iv)} for all $\ell=0,1,2,\dots$:
\begin{itemize}
\item[\rm(i)] Compute the Galerkin solution $\Phi_\ell\in \PP^0(\TT_\ell)$ from~\eqref{eq:discreteform:residual}.

\item[\rm(ii)] Compute the local contributions $\mu_\ell(T)$ from~\eqref{eq:residual:indicator} for all $T\in\TT_\ell$.

\item[\rm(iii)] Determine a set of, up to the multiplicative constant $\Cmark$, minimal cardinality, which satisfies the D\"orfler marking criterion $\theta \, \sum_{T \in \MM_\ell} \mu_\ell(T)^2 \le \mu_\ell^2$.

\item[\rm(iv)] Use NVB and refine all marked elements $T \in \MM_\ell$ to obtain $\TT_{\ell+1} := \refine(\TT_\ell, \MM_\ell)$.
\end{itemize}
\textbf{Output:} Sequence of successively refined triangulations $\TT_\ell$ as well as corresponding Galerkin solutions $\Phi_\ell \in \PP^0(\TT_\ell)$ and weighted-residual error estimators $\mu_\ell$.\qed
\end{algorithm}

The following theorem is the main result from~\cite{fkmp,gantumur,fkmp:part1}:

\begin{theorem}\label{theorem:algorithm:res}
For all $0<\theta\le 1$, there exist constants $\Clin>0$ and $0<\qlin<1$ such that the output of Algorithm~\ref{algorithm} is linearly convergent in the sense of 
\begin{align}\label{eq:thm:linear}
 \eta_{\ell+n} \le \Clin\qlin^n\,\eta_\ell
 \quad\text{for all }\ell,n\in\N_0.
\end{align}
Moreover, there exists a constant $0<\theta_{\rm opt}<1$ such that for all $0<\theta<\theta_{\rm opt}$, the output of Algorithm~\ref{algorithm:res} converges with the optimal algebraic rate in the following sense:
For all $s > 0$, define
\begin{align}\label{eq:class}
 \A_s(\phi) := \sup_{N\ge\#\TT_0}\min_{\substack{\TT_{\rm opt}\in \refine(\TT_0)\\\#\TT_{\rm opt}\le N}}N^s\mu_{\rm opt}
 \in [0,\infty]
\end{align}
where $\refine(\TT_0)$ denotes the set of all possible NVB refinements of $\TT_0$ and $\TT_{\rm opt}$ corresponds to the estimator $\mu_{\rm opt}$.  Then, there exists constants $c_{\rm opt}, C_{\rm opt} > 0$ such that
\begin{align*}
 c_{\rm opt}^{-1} \, \A_s(\phi) \le \sup_{\ell \in \N_0} (\#\TT_\ell)^s \, \mu_\ell \le C_{\rm opt} \, \A_s(\phi),
\end{align*}
i.e., Algorithm~\ref{algorithm:res} asymptotically realizes (and even characterizes) each possible algebraic convergence rate $s>0$ and therefore converges with the optimal rate.
The constants $\Clin$ and $\qlin$ depend only on $\Gamma$, $\gamma$-shape regularity for $\TT_0$, and $\theta$.
While $c_{\rm opt}$ is generic, $C_{\rm opt}$ depends only on $\TT_0$, $\Clin$, $\qlin$, $\Cmark$, and $s$.
\qed
\end{theorem}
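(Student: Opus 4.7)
The plan is to derive Theorem~\ref{theorem:algorithm:res} from the abstract "axioms of adaptivity" framework developed in~\cite{fkmp,gantumur,fkmp:part1}, by checking that the weighted-residual estimator $\mu_\ell$ on $\PP^0(\TT_\ell)$ satisfies the required structural properties with respect to any NVB-refinement $\TT_\bullet \in \refine(\TT_\ell)$. Denoting the corresponding Galerkin solutions by $\Phi_\ell, \Phi_\bullet$, I would verify: \emph{stability on non-refined elements} $|\mu_\bullet(T) - \mu_\ell(T)| \lesssim \enorm{\Phi_\bullet-\Phi_\ell}$ for $T \in \TT_\ell \cap \TT_\bullet$; \emph{reduction on refined elements} $\mu_\bullet(T)^2 \le \tfrac12\mu_\ell(T)^2 + C\enorm{\Phi_\bullet-\Phi_\ell}^2$; \emph{reliability} $\enorm{\phi-\Phi_\ell} \lesssim \mu_\ell$, which is already recorded in~\eqref{eq:reliability:residual}; and \emph{general quasi-orthogonality} $\sum_{k\ge\ell}\enorm{\Phi_{k+1}-\Phi_k}^2 \lesssim \enorm{\phi-\Phi_\ell}^2$, which follows from the Pythagoras identity in the symmetric energy product and nestedness $\PP^0(\TT_\ell)\subset\PP^0(\TT_\bullet)$.

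The stability and reduction properties rely on the mapping property $V:L^2(\Gamma)\to H^1(\Gamma)$ of the single-layer operator together with an inverse-type estimate $\norm{h_\ell^{1/2}\nabla V\psi}{L^2(\Gamma)} \lesssim \enorm{\psi}$ valid on discrete spaces. Splitting $\nabla(f-V\Phi_\bullet) = \nabla(f-V\Phi_\ell) - \nabla V(\Phi_\bullet-\Phi_\ell)$ and applying the triangle inequality yields the additive term $\enorm{\Phi_\bullet-\Phi_\ell}$ in both axioms, while the contraction factor $1/2$ in the reduction axiom is produced by the halving of $\diam(T)$ under bisection.

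For linear convergence, the stability and reduction axioms combined with the D\"orfler marking step~\eqref{eq:doerfler} for $\mu_\ell$ give an estimator reduction $\mu_{\ell+1}^2 \le q\mu_\ell^2 + C\enorm{\Phi_{\ell+1}-\Phi_\ell}^2$ with $q=q(\theta)<1$. Together with quasi-orthogonality and reliability, this yields a uniform contraction $\Delta_{\ell+1} \le \qlin^2\Delta_\ell$ of the quasi-error $\Delta_\ell := \enorm{\phi-\Phi_\ell}^2 + \gamma \mu_\ell^2$ for suitable $\gamma>0$, and the claimed inequality $\mu_{\ell+n} \le \Clin \qlin^n \mu_\ell$ then follows by reliability. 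For optimal rates (provided $\theta<\theta_{\rm opt}$), I would combine this estimator contraction with three further ingredients: the NVB mesh-closure estimate $\#\TT_\ell - \#\TT_0 \lesssim \sum_{j<\ell}\#\MM_j$ of Stevenson~\cite{stevenson07}; the optimality of D\"orfler marking, whereby any mesh $\TT_\star$ realizing tolerance $\eps$ in the class $\A_s(\phi)$ induces a D\"orfler-admissible refinement set of cardinality $\lesssim \eps^{-1/s}\A_s(\phi)^{1/s}$; and a comparison lemma bounding $\mu_\ell$ on $\TT_\ell$ against $\mu_\star$ on the common refinement $\TT_\ell \oplus \TT_\star$.

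The main obstacle is the \emph{discrete reliability} bound needed for the comparison lemma, namely $\enorm{\Phi_\bullet-\Phi_\ell}^2 \lesssim \sum_{T\in\TT_\ell\setminus\TT_\bullet}\mu_\ell(T)^2$. Because $V$ is a non-local integral operator, the residual $f-V\Phi_\ell$ has global support, so its action on the Galerkin difference $\Phi_\bullet-\Phi_\ell$ cannot be localized to the refined region by elementary means. The proof in~\cite{fkmp,fkmp:part1} uses a local projection onto $\PP^0(\TT_\ell)$ that acts as the identity on $\TT_\ell \cap \TT_\bullet$, combined with the $H^1(\Gamma)$-mapping property of $V$ and fractional-order localization of $H^{-1/2}(\Gamma)$-norms. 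Once discrete reliability is established, the abstract framework of~\cite{fkmp:part1} directly yields both linear convergence and the characterization $\sup_\ell(\#\TT_\ell)^s\mu_\ell \simeq \A_s(\phi)$ as stated.
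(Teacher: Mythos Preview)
The paper does not actually prove this theorem: it is stated with a trailing \qed\ and attributed verbatim as ``the main result from~\cite{fkmp,gantumur,fkmp:part1}'', with no proof environment following. Your sketch is therefore not competing against an argument in the paper but against the cited literature, and on that score it is accurate: the axioms-of-adaptivity framework (stability, reduction, reliability, general quasi-orthogonality, discrete reliability) together with the NVB closure estimate is precisely the route taken in those references, and you have correctly identified the inverse estimate $\norm{h_\ell^{1/2}\nabla V\psi}{L^2(\Gamma)}\lesssim\enorm{\psi}$ and discrete reliability as the non-trivial ingredients specific to the non-local operator $V$.

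Two small points of precision. First, your reduction axiom is written element-wise as $\mu_\bullet(T)^2 \le \tfrac12\mu_\ell(T)^2 + C\enorm{\Phi_\bullet-\Phi_\ell}^2$, but a refined element $T\in\TT_\bullet$ is not an element of $\TT_\ell$; the axiom should be stated as a sum over $\TT_\bullet\setminus\TT_\ell$ versus $\TT_\ell\setminus\TT_\bullet$. Second, in the discrete reliability bound the refined set $\TT_\ell\setminus\TT_\bullet$ typically needs to be enlarged by one layer of neighbours (cf.~\cite{fkmp:part1}); this does not affect the abstract machinery but is worth stating correctly.
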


\begin{remark}
	The recent work~\cite{abem+solve} proves that Theorem~\ref{theorem:algorithm:res} remains valid, if~\eqref{eq:discreteform:residual} is solved
	inexactly by PCG with optimal additive Schwarz preconditioner. With the PCG iterates $\Phi_{\ell,k} \approx \Phi_\ell$, the iterative solver is stopped 
	if $\enorm{\Phi_{\ell,k-1} - \Phi_{\ell,k} } \leq \lambda \mu_\ell(\Phi_{\ell,k})$, where the residual error estimator is evaluated at $\Phi_{\ell,k}$ instead 
	of the exact Galerkin solution. Then, linear convergence~\eqref{eq:thm:linear} holds for any $\lambda >0$, while optimal rates require $\lambda \ll 1$ to be sufficiently small.
\end{remark}

To empirically measure the performance of Algorithm~\ref{algorithm} in the numerical experiments of Section~\ref{section:computations}, we consider the output of Algorithm~\ref{algorithm:res} for $f = 1$ as benchmark. 

\begin{corollary}
Consider the output of Algorithm~\ref{algorithm:res} for $f = 1$ and arbitrary $0 < \theta \le 1$ and $\Cmark \ge 1$. Define the discrete capacity
\begin{align}
 \capacity'_\ell(\Omega) := \frac{1}{4\pi} \, \dual{\Phi_\ell}{1}_\Gamma.
\end{align}
Then, there holds~\eqref{eq:prop:cap1} as well as monotonicity $0 < \capacity'_\ell(\Omega) \le \capacity'_{\ell + 1} \le \capacity(\Omega)$ for all $\ell \in \N_0$ and, in particular, convergence
\begin{align}\label{eq1:corollary}
 0 \le \capacity(\Omega) - \capacity'_\ell(\Omega) \le C_{\rm cap}\, \mu_\ell^2 \to 0 
 \quad \text{as } \ell \to \infty.
\end{align} 
Moreover, if $0 < \theta < \theta_{\rm opt}$ is sufficiently small in the sense of Theorem~\ref{theorem:algorithm:res}, it follows that
\begin{align}\label{eq2:corollary}
 0 \le \capacity(\Omega) - \capacity'_\ell(\Omega) \le C'_{\rm cap} \, \A_s(\phi)^2 \, (\#\TT_\ell)^{-2s}
 \quad \text{for all } \ell \in \N_0.
\end{align}
The constant $C_{\rm cap}$ depends only on $\Gamma$ and $\gamma$-shape regularity of $\TT_\ell$, and $C'_{\rm cap}:= C_{\rm cap} \Copt^2$ with $\Copt$ from Theorem~\ref{theorem:algorithm:res}.
\end{corollary}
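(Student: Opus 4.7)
The plan is to mirror the proof of Proposition~\ref{proposition:capacity} for the primal discretization and then to chain the resulting identity with reliability of $\mu_\ell$ and with Theorem~\ref{theorem:algorithm:res}. First, I would establish the analogue of~\eqref{eq:prop:cap1} for $\Phi_\ell \in \PP^0(\TT_\ell)$: since $V\phi = 1$ on $\Gamma$, the symmetry of $V$ together with the Galerkin orthogonality $\dual{V(\phi-\Phi_\ell)}{\Psi_\ell}_\Gamma = 0$ (valid for all $\Psi_\ell \in \PP^0(\TT_\ell)$) gives the chain
\begin{align*}
    4\pi\,\big(\capacity(\Omega) - \capacity'_\ell(\Omega)\big)
    &= \dual{\phi-\Phi_\ell}{1}_\Gamma
    = \dual{\phi-\Phi_\ell}{V\phi}_\Gamma \\
    &= \dual{V(\phi-\Phi_\ell)}{\phi-\Phi_\ell}_\Gamma
    = \enorm{\phi-\Phi_\ell}^2 \ge 0.
\end{align*}
This simultaneously yields~\eqref{eq:prop:cap1} and the upper bound $\capacity'_\ell(\Omega) \le \capacity(\Omega)$. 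Strict positivity $\capacity'_\ell(\Omega) > 0$ follows verbatim from the argument in Proposition~\ref{proposition:capacity}, with $\matrix{V}_\ell^{\rm dual}$ replaced by the primal single-layer matrix $\matrix{V}_\ell$ and $\boldsymbol{1}_\ell^{\rm dual}$ by its primal analogue with entries $|T_{\ell,j}| > 0$.

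For monotonicity, the essential ingredient is that NVB produces nested discrete spaces, $\PP^0(\TT_\ell) \subset \PP^0(\TT_{\ell+1})$, so Galerkin orthogonality gives the standard best-approximation property $\enorm{\phi-\Phi_{\ell+1}} \le \enorm{\phi-\Phi_\ell}$. Applying~\eqref{eq:prop:cap1} at two consecutive levels immediately yields $\capacity(\Omega) - \capacity'_{\ell+1}(\Omega) \le \capacity(\Omega) - \capacity'_\ell(\Omega)$, i.e., $\capacity'_\ell(\Omega) \le \capacity'_{\ell+1}(\Omega)$.

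To obtain~\eqref{eq1:corollary}, I would combine~\eqref{eq:prop:cap1}, the norm equivalence $\enorm{\cdot} \simeq \norm{\cdot}{H^{-1/2}(\Gamma)}$, and reliability~\eqref{eq:reliability:residual} of the weighted-residual estimator; the resulting multiplicative constants are absorbed into $C_{\rm cap}$. The convergence $\mu_\ell \to 0$ then follows from the linear-convergence part~\eqref{eq:thm:linear} of Theorem~\ref{theorem:algorithm:res}. Finally, for~\eqref{eq2:corollary}, I take $0 < \theta < \theta_{\rm opt}$ and invoke the optimal-rate bound $\mu_\ell \le \Copt\,\A_s(\phi)\,(\#\TT_\ell)^{-s}$ from Theorem~\ref{theorem:algorithm:res}; squaring it and inserting into~\eqref{eq1:corollary} yields the claim with $C'_{\rm cap} = C_{\rm cap}\,\Copt^2$. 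I do not anticipate any genuine obstacle, since every ingredient is already available in the paper. The only minor technical point is the nestedness $\PP^0(\TT_\ell) \subset \PP^0(\TT_{\ell+1})$ under NVB, which is standard because each son triangle is contained in its parent.
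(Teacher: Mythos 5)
Your proposal is correct and follows essentially the same route as the paper's own proof: the analogue of~\eqref{eq:prop:cap1} via Galerkin orthogonality, reliability of $\mu_\ell$ combined with Theorem~\ref{theorem:algorithm:res} for~\eqref{eq1:corollary}--\eqref{eq2:corollary}, and nestedness $\PP^0(\TT_\ell)\subseteq\PP^0(\TT_{\ell+1})$ for the monotonicity. The only cosmetic difference is that you derive monotonicity from the best-approximation inequality $\enorm{\phi-\Phi_{\ell+1}}\le\enorm{\phi-\Phi_\ell}$, whereas the paper spells out the Pythagoras identity $\enorm{\phi-\Phi_\ell}^2=\enorm{\phi-\Phi_{\ell+1}}^2+\enorm{\Phi_{\ell+1}-\Phi_\ell}^2$ and the explicit increment $4\pi(\capacity'_{\ell+1}-\capacity'_\ell)=\enorm{\Phi_{\ell+1}-\Phi_\ell}^2\ge0$; both are immediate consequences of the same orthogonality.
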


\begin{proof}
Arguing as in the proof of Proposition~\ref{proposition:capacity}, we see that 
\begin{align*}
 0 < \capacity'_\ell(\Omega) \le \capacity(\Omega) 
 \quad \text{as well as} \quad
 0 \le \capacity(\Omega) - \capacity'_\ell(\Omega)
 \simeq \norm{\phi - \Phi_\ell}{H^{-1/2}(\Gamma)}^2.
\end{align*}
Therefore,~\eqref{eq1:corollary}--\eqref{eq2:corollary} are immediate consequences of Theorem~\ref{theorem:algorithm:res} and reliability~\eqref{eq:reliability:residual}. Overall, it only remains to prove monotonicity $\capacity'_\ell(\Omega) \le \capacity'_{\ell + 1}(\Omega)$ for all $\ell \in \N_0$: 
Note that $\PP^0(\TT_\ell) \subseteq \PP^0(\TT_{\ell+1})$, which fails for the sequence of dual triangulations, i.e., $\PP^0(\TT_\ell^{\rm dual}) \not \subseteq \PP^0(\TT_{\ell+1}^{\rm dual})$. With this additional nestedness, it holds that $\Phi_{\ell+1}-\Phi_\ell\in \PP^0(\TT_{\ell+1})$. Therefore, there holds the discrete Galerkin orthogonality 
$\dual{V(\Phi_{\ell+1}-\Phi_\ell)}{\Psi_{\ell+1} }_\Gamma = 0$ for all $\Psi_{\ell+1} \in  \PP^0(\TT_{\ell+1} )$. 
This implies the Pythagoras theorem
\begin{align*}
 \enorm{\phi-\Phi_\ell}^2 = \enorm{\phi-\Phi_{\ell+1}}^2 + \enorm{\Phi_{\ell+1}-\Phi_\ell}^2
\end{align*}
with respect to the energy norm.
Arguing as for the proof of~\eqref{eq:prop:cap1}, we thus see that
\begin{align*}
 4 \pi \, \big( \capacity_{\ell+1}(\Omega) - \capacity_\ell(\Omega) \big)
 = \enorm{\Phi_{\ell+1}-\Phi_\ell}^2 \ge 0.
\end{align*}
This concludes the proof.
\end{proof}

Finally, we refer to~\cite{fkmp:part2} for the extension of Theorem~\ref{theorem:algorithm:res}
to the hypersingular operator associated with the Laplace problem and to~\cite{bem_helmholtz} for optimality in the case of the Helmholtz equation.
The work~\cite{pointabem} gives a first mathematical proof of optimal algebraic convergence rates for the point-wise approximation of 
the PDE-solution via BEM and potential operators. 
Moreover,~\cite{goafem} generalizes the latter approach to general goal-oriented adaptivity with linear goal functional.


\section{Implementation with Bempp}
\label{section:app:code}

\definecolor{keywords}{RGB}{255,0,90}
\definecolor{my_blue}{RGB}{0,0,113}
\definecolor{my_red}{RGB}{255,0,0}
\definecolor{my_green}{RGB}{0,150,0}

\lstset{language=Python, 
	basicstyle=\ttfamily\tiny, 
	numbers=left,
	keywordstyle=\color{my_blue},
	commentstyle=\color{red},
	stringstyle=\color{my_green},
	showstringspaces=false,
	identifierstyle=\color{black},
	procnamekeys={def,class}}

The following code demonstrates a possible implementation of Algorithm~\ref{algorithm}
in Python with the Bempp library. The demonstration code can also be downloaded from 
the Bempp homepage \href{https://bempp.com}{https://bempp.com}.

\lstinputlisting{zz_capacity_paper.py}

\bigskip



\bibliographystyle{alpha}
\bibliography{literature,literature_dpr}

\newcommand{\etalchar}[1]{$^{#1}$}
\begin{thebibliography}{vtWGBA15}

\bibitem[AFF{\etalchar{+}}17]{invest}
Markus Aurada, Michael Feischl, Thomas F{\"u}hrer, Michael Karkulik, J.~Markus
  Melenk, and Dirk Praetorius.
\newblock Local inverse estimates for non-local boundary integral operators.
\newblock {\em Math. Comp.}, 86(308):2651--2686, 2017.

\bibitem[BBHP19]{bem_helmholtz}
Alex Bespalov, Timo Betcke, Alexander Haberl, and Dirk Praetorius.
\newblock Adaptive {BEM} with optimal convergence rates for the {H}elmholtz
  equation.
\newblock {\em Comput. Methods Appl. Mech. Engrg.}, 346:260--287, 2019.

\bibitem[BC02]{bc2002}
S\"{o}ren Bartels and Carsten Carstensen.
\newblock Each averaging technique yields reliable a posteriori error control
  in {FEM} on unstructured grids. {I}. {L}ow order conforming, nonconforming,
  and mixed {FEM}.
\newblock {\em Math. Comp.}, 71(239):945--969, 2002.

\bibitem[Beb08]{bebendorf2008}
Mario Bebendorf.
\newblock {\em Hierarchical matrices}, volume~63 of {\em Lecture Notes in
  Computational Science and Engineering}.
\newblock Springer-Verlag, Berlin, 2008.

\bibitem[B{\"o}r10]{boerm2010}
Steffen B{\"o}rm.
\newblock {\em Efficient numerical methods for non-local operators:
  ${\mathcal{H}}^2$-matrix compression, algorithms and analysis}.
\newblock European Mathematical Society (EMS), Z\"{u}rich, 2010.

\bibitem[BT03]{banjai03}
Lehel Banjai and Lloyd~N. Trefethen.
\newblock A multipole method for {S}chwarz-{C}hristoffel mapping of polygons
  with thousands of sides.
\newblock {\em SIAM J. Sci. Comput.}, 25(3):1042--1065, 2003.

\bibitem[CMS01]{cms01}
Carsten Carstensen, Matthias Maischak, and Ernst~P. Stephan.
\newblock A posteriori error estimate and {$h$}-adaptive algorithm on surfaces
  for {S}ymm's integral equation.
\newblock {\em Numer. Math.}, 90(2):197--213, 2001.

\bibitem[D{\"o}r96]{doerfler96}
Willy D{\"o}rfler.
\newblock A convergent adaptive algorithm for {P}oisson's equation.
\newblock {\em SIAM J. Numer. Anal.}, 33(3):1106--1124, 1996.

\bibitem[DT02]{driscoll02}
Tobin~A. Driscoll and Lloyd~N. Trefethen.
\newblock {\em Schwarz-{C}hristoffel mapping}.
\newblock Cambridge University Press, Cambridge, 2002.

\bibitem[FFH{\etalchar{+}}15]{FFHKP15}
Michael Feischl, Thomas F{\"u}hrer, Norbert Heuer, Michael Karkulik, and Dirk
  Praetorius.
\newblock Adaptive boundary element methods.
\newblock {\em Archives of Computational Methods in Engineering},
  22(3):309--389, Jul 2015.

\bibitem[FFK{\etalchar{+}}14]{fkmp:part1}
Michael Feischl, Thomas F{\"u}hrer, Michael Karkulik, Jens~Markus Melenk, and
  Dirk Praetorius.
\newblock Quasi-optimal convergence rates for adaptive boundary element methods
  with data approximation, {P}art {I}: {W}eakly-singular integral equation.
\newblock {\em Calcolo}, 51(4):531--562, 2014.

\bibitem[FFK{\etalchar{+}}15]{fkmp:part2}
Michael Feischl, Thomas F{\"u}hrer, Michael Karkulik, J.~Markus Melenk, and
  Dirk Praetorius.
\newblock Quasi-optimal convergence rates for adaptive boundary element methods
  with data approximation. {P}art {II}: {H}yper-singular integral equation.
\newblock {\em Electron. Trans. Numer. Anal.}, 44:153--176, 2015.

\bibitem[FFKP14]{ffkp14}
Michael Feischl, Thomas F\"{u}hrer, Michael Karkulik, and Dirk Praetorius.
\newblock {ZZ}-type a posteriori error estimators for adaptive boundary element
  methods on a curve.
\newblock {\em Eng. Anal. Bound. Elem.}, 38:49--60, 2014.

\bibitem[FGH{\etalchar{+}}16]{pointabem}
Michael Feischl, Gregor Gantner, Alexander Haberl, Dirk Praetorius, and Thomas
  F\"uhrer.
\newblock Adaptive boundary element methods for optimal convergence of point
  errors.
\newblock {\em Numer. Math.}, 132(3):541--567, 2016.

\bibitem[FHPS19]{abem+solve}
Thomas F\"uhrer, Alexander Haberl, Dirk Praetorius, and Stefan Schimanko.
\newblock Adaptive {B}{E}{M} with inexact {P}{C}{G} solver yields almost
  optimal computational costs.
\newblock {\em Numer. Math.}, 141:967--1008, 2019.

\bibitem[FKMP13]{fkmp}
Michael Feischl, Michael Karkulik, {Jens Markus} Melenk, and Dirk Praetorius.
\newblock Quasi-optimal convergence rate for an adaptive boundary element
  method.
\newblock {\em SIAM J. Numer. Anal.}, 51(2):1327--1348, 2013.

\bibitem[FPvdZ16]{goafem}
Michael Feischl, Dirk Praetorius, and Kristoffer~G. van~der Zee.
\newblock An abstract analysis of optimal goal-oriented adaptivity.
\newblock {\em SIAM J. Numer. Anal.}, 54(3):1423--1448, 2016.

\bibitem[Gan13]{gantumur}
Tsogtgerel Gantumur.
\newblock Adaptive boundary element methods with convergence rates.
\newblock {\em Numer. Math.}, 124(3):471--516, 2013.

\bibitem[GBB{\etalchar{+}}15]{bempp2}
Samuel~P. Groth, Anthony~J. Baran, Timo Betcke, Stephan Havemann, and Wojciech
  \'Smigaj.
\newblock The boundary element method for light scattering by ice crystals and
  its implementation in bem++.
\newblock {\em J. Quant. Spectrosc. Radiat. Transf.}, 167(Supplement C):40 --
  52, 2015.

\bibitem[GGMR09]{gr2009}
Leslie Greengard, Denis Gueyffier, Per-Gunnar Martinsson, and Vladimir Rokhlin.
\newblock Fast direct solvers for integral equations in complex
  three-dimensional domains.
\newblock {\em Acta Numer.}, 18:243--275, 2009.

\bibitem[GM06]{gm06}
Ivan~G. Graham and William McLean.
\newblock Anisotropic mesh refinement: the conditioning of {G}alerkin boundary
  element matrices and simple preconditioners.
\newblock {\em SIAM J. Numer. Anal.}, 44(4):1487--1513, 2006.

\bibitem[GR97]{gr1997}
Leslie Greengard and Vladimir Rokhlin.
\newblock A new version of the fast multipole method for the {L}aplace equation
  in three dimensions.
\newblock In {\em Acta numerica, 1997}, volume~6 of {\em Acta Numer.}, pages
  229--269. Cambridge Univ. Press, Cambridge, 1997.

\bibitem[GS18]{eps_gwinner}
Joachim Gwinner and Ernst~Peter Stephan.
\newblock {\em Advanced boundary element methods}.
\newblock Springer, Cham, 2018.

\bibitem[Hac15]{hackbusch2015}
Wolfgang Hackbusch.
\newblock {\em Hierarchical matrices: algorithms and analysis}.
\newblock Springer, Heidelberg, 2015.

\bibitem[Hip06]{Hiptmair2006}
Ralf Hiptmair.
\newblock {Operator Preconditioning}.
\newblock {\em Comput. Math. Appl.}, 52(5):699--706, 2006.

\bibitem[HJHUT14]{hju2014}
Ralf Hiptmair, Carlos Jerez-Hanckes, and Carolina Urz\'{u}a-Torres.
\newblock Mesh-independent operator preconditioning for boundary elements on
  open curves.
\newblock {\em SIAM J. Numer. Anal.}, 52(5):2295--2314, 2014.

\bibitem[HMW10]{HMW10}
Chi-Ok Hwang, Michael Mascagni, and Taeyoung Won.
\newblock Monte {C}arlo methods for computing the capacitance of the unit cube.
\newblock {\em Math. Comput. Simulation}, 80(6):1089--1095, 2010.

\bibitem[HP13]{hp13}
Johan Helsing and Karl-Mikael Perfekt.
\newblock On the polarizability and capacitance of the cube.
\newblock {\em Appl. Comput. Harmon. Anal.}, 34(3):445--468, 2013.

\bibitem[HW08]{hsiao-wendland}
George~C. Hsiao and Wolfgang~L. Wendland.
\newblock {\em Boundary integral equations}.
\newblock Springer, Berlin, 2008.

\bibitem[Kel67]{kellogg}
Oliver~Dimon Kellogg.
\newblock {\em Foundations of potential theory}.
\newblock Reprint from the first edition of 1929. Springer, Berlin, 1967.

\bibitem[KPP13]{kpp13}
Michael Karkulik, David Pavlicek, and Dirk Praetorius.
\newblock On 2{D} newest vertex bisection: optimality of mesh-closure and
  {$H^1$}-stability of {$L_2$}-projection.
\newblock {\em Constr. Approx.}, 38(2):213--234, 2013.

\bibitem[McL00]{mclean}
William McLean.
\newblock {\em Strongly elliptic systems and boundary integral equations}.
\newblock Cambridge University Press, Cambridge, 2000.

\bibitem[OSW06]{osw2006}
G{\"u}nther Of, Olaf Steinbach, and Wolfgang~L. Wendland.
\newblock The fast multipole method for the symmetric boundary integral
  formulation.
\newblock {\em IMA J. Numer. Anal.}, 26(2):272--296, 2006.

\bibitem[P{\'o}l47]{polya47}
George P{\'o}lya.
\newblock Estimating electrostatic capacity.
\newblock {\em Amer. Math. Monthly}, 54(4):201--206, 1947.

\bibitem[Rea97]{Read97}
{Frank H.} Read.
\newblock Improved extrapolation technique in the boundary element method to
  find the capacitances of the unit square and cube.
\newblock {\em J. Comp. Phys.}, 133(1):1 -- 5, 1997.

\bibitem[Rod94]{rodriguez1994}
Rodolfo Rodriguez.
\newblock Some remarks on zienkiewicz--zhu estimator.
\newblock {\em Numer. Methods Partial Differential Equations}, 10(5):625--635,
  1994.

\bibitem[SBA{\etalchar{+}}15]{bempp}
Wojciech \'Smigaj, Timo Betcke, Simon Arridge, Joel Phillips, and Martin
  Schweiger.
\newblock Solving boundary integral problems with {BEM}++.
\newblock {\em ACM Trans. Math. Software}, 41(2):Art. 6, 40, 2015.

\bibitem[SS11]{sauter-schwab}
Stefan~A. Sauter and Christoph Schwab.
\newblock {\em Boundary element methods}.
\newblock Springer, Berlin, 2011.

\bibitem[Ste07]{stevenson07}
Rob Stevenson.
\newblock Optimality of a standard adaptive finite element method.
\newblock {\em Found. Comput. Math.}, 7(2):245--269, 2007.

\bibitem[Ste08]{steinbach}
Olaf Steinbach.
\newblock {\em {Numerical approximation methods for elliptic boundary value
  problems}}.
\newblock Springer, New York, 2008.

\bibitem[SW98]{StWe98}
Olaf Steinbach and Wolfgang~L. Wendland.
\newblock {The construction of some efficient preconditioners in the boundary
  element method}.
\newblock {\em Adv. Comput. Math.}, 9(1-2):191--216, 1998.

\bibitem[vtWGBA15]{bempp3}
Elwin van~'t Wout, Pierre Gélat, Timo Betcke, and Simon Arridge.
\newblock A fast boundary element method for the scattering analysis of
  high-intensity focused ultrasound.
\newblock {\em J. Acoust. Soc. Am.}, 138(5):2726--2737, 2015.

\bibitem[ZZ87]{zz1987}
Oleg Zienkiewicz and {Jian Z.} Zhu.
\newblock A simple error estimator and adaptive procedure for practical
  engineering analysis.
\newblock {\em Int. J. Numer. Methods Eng.}, 24(2):337--357, 1987.

\end{thebibliography}

\end{document}